\newtheorem{theorem}{Theorem}[section]
\newtheorem{lemma}[theorem]{Lemma}
\newtheorem{proposition}[theorem]{Proposition}
\newtheorem{corollary}[theorem]{Corollary}
\theoremstyle{definition}
\newtheorem{example}[theorem]{Example}
\theoremstyle{remark}
\newtheorem{remark}[theorem]{Remark}
\numberwithin{equation}{section}
\def\bC{\Bbb C}
\def\bK{\Bbb K}
\def\bM{\Bbb M}
\def\ep{\epsilon}
\def\op{\overline p}
\def\oq{\overline q}
\def\ou{\overline u}
\def\ep{\epsilon}
\def\b1{\bold 1}
\def\tA{\widetilde A}
\def\tB{\widetilde B}
\begin{document}

\setcounter{page}{1}

\title[Strongly Operator Convex Functions]{On the Property $IR$ of Friis and R\o rdam}

\author[Lawrence G.~Brown]{Lawrence G.~Brown}

\address{Department of Mathematics, Purdue University, 150 N.~University Street, West Lafayette, IN\ \ 47907}
\email{\textcolor[rgb]{0.00,0.00,0.84}{lgb@math.purdue.edu}}

\dedicatory{Dedicated to the memory of Ronald G.~Douglas}

\subjclass[2010]{46L05}

\keywords{$C^*-$algebras, invertible, extension, non-stable $K$-theory}


\begin{abstract}
In a 1997 paper Lin solved a longstanding problem as follows: For each $\ep>0$, there is $\delta>0$ such that if $h$ and $k$ are self-adjoint contractive $n\times n$ matrices and $\| hk-kh\|<\delta$, then there are commuting self-adjoint matrices $h'$ and $k'$ such that $\|h'-h\|$, $\|k'-k\|<\ep$. Hence $\delta$ depends only on $\ep$ and not on $n$. In a 1996 paper Friis and R\o rdam greatly simplified Lin's proof by using a property they called $IR$. They also generalized Lin's result by showing that the matrix algebras can be replaced by any $C^*$-algebras satisfying $IR$. The purpose of this paper is to study the property $IR$. One of our results shows how $IR$ behaves for $C^*$-algebra extensions. Other results concern non-stable $K-$theory.  One shows that $IR$ (at least the stable version) implies a cancellation property for projections which is intermediate between the strong cancellation satisfied by $C^*$-algebras of stable rank one, and the weak cancellation defined in a 2014 paper of Pedersen and the author.
\end{abstract} \maketitle

\section{\textbf{Definitions and basic results}}

For a non-unital $C^*$--algebra $A$, $\tA$ denotes the result of adjoining an identity, and $\tA=A$ if $A$ is unital. The identity of any unital $C^*$--algebra is denoted by $\b1$. Also, for any $C^*$--algebra $A$, we denote by $A^+$ the result of adjoining a new identity. Thus $A^+=\tA$ if $A$ is non-unital and $A^+\cong A\oplus\bC$ if $A$ is unital. For $a\in A$ its spectrum is denoted by $\sigma(a)$. The set of invertible elements of a unital $C^*$--algebra $A$ is denoted by $GL(A)$.

For a unital $C^*$--algebra $A$, Friis and R\o rdam \cite{FR} defined $R(A)$ as follows: The element $a$ is in $R(A)$ if and only if there does not exist a (closed two--sided) ideal $I$ such that $\pi(a)$ is one--sided invertible but not invertible, where $\pi:A\to A/I$ is the quotient map. They then said that $A$ satisfies $IR$ if $R(A)$ is in the (norm) closure of $GL(A)$. Of course it is obvious that ${\overline{GL(A)}} \subset R(A)$, since no element that is one--sided invertible but not invertible (in some quotient algebra of $A$) is approximable by invertibles.  For non-unital $A$, they said $A$ satisfies $IR$ if $\tA$ does. For expository purposes we introduce a formally weaker property, but in Proposition 2.2 we will show it is equivalent. An arbitrary $C^*$--algebra $A$ satisfies $IR_0$ if every element of $R(A^+)\cap (\b1+A)$ is in the closure of $GL(A^+)$. Obviously $IR_0$ is equivalent to $IR$ in the unital case, but for non-unital $A$ we are leaving open (for now) the possibility that $R(\tA)\cap A$ is not contained in the closure of $GL(\tA)$. 

It is convenient to introduce some hereditary $C^*$--subalgebras of the unital algebra $A$. For $t\in A$ and $\ep >0$, $B_{r,\ep}(t)$ denotes the hereditary $C^*$--algebra whose open projection is $\chi_{[o,\ep)}(|t|)$ and $B_{\ell,\ep}(t)$ denotes the hereditary $C^*$--algebra whose open projection is $\chi_{[0,\ep)}(|t^*|)$. Here $\chi_{[o,\ep)}(|t|)$, for example, is a spectral projection of $|t|$ in the enveloping von Neumann algebra $A^{**}$. It is not necessary for the reader to know about open projections to understand these concepts. Thus $B_{r, \ep}(t)$ is just the hereditary $C^*$--algebra generated by $f(|t|)$, where $f$ is a continuous function such that $\{x:f(x)=0\}=[\ep,\infty)$. If $I_+$  and $I_-$ are the ideals generated respectively by $B_{r,\ep}(t)$ and $B_{\ell,\ep}(t)$, then $I_+$ is the smallest ideal such that $\pi(t)$ is left invertible and $m(\pi(t))\ge\ep$. Here, for a left invertible element $s$, $m(s)$ is the largest number such that $\|sv\|\ge m(s)\|v\|$, when the algebra is faithfully represented on a Hilbert space. Also write $m_*(s)=m(s^*)$ where $s$ is right invertible. Then $I_-$ is the smallest ideal such that $\pi(t)$ is right invertible and $m_*(\pi(t))\ge \ep$. Since $\sigma(|t|)\cup\{0\}=\sigma(|t^*|)\cup\{0\}$, then $t\in R(A)$ if and only if $I_+=I_-$ for all $\ep>0$. It is sufficient that this property hold for arbitrarily small values of $\ep$. Also, for example, if for each $\ep>0$ there is $\delta >0$ such that the ideal generated by $B_{r,\ep}(t)$ contains $B_{\ell,\delta}(t)$ and the ideal generated by $B_{\ell,\ep}(t)$ contains $B_{r,\delta}(t)$, then $t$ is in $R(A)$.

For $A$ unital and $\ep>0$ we say that $t$ is {\it $\ep$--almost regular} if $B_{r,\ep}(t)$ and $B_{\ell,\ep}(t)$ generate the same ideal. The same is then true for any $\ep'>\ep$.

\begin{proposition} Let $A$ be a unital $C^*$--algebra and $t\in A$.
\begin{itemize}
\item[(i)] If $t$ is $\ep$--almost regular, then dist$(t,R(A))\le\ep$.
\item[(ii)] If dist$(t,R(A))<\ep$, then $t$ is $2\ep$--almost regular. 
\item[(iii)] If $\| |t|-|t^*|\|<\ep$, then $t$ is $\ep$--almost regular.
\end{itemize}
\end{proposition}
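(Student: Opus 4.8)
The plan is to reformulate $\delta$--almost regularity and membership in $R(A)$ as spectral conditions in quotients. For an ideal $I$ with quotient map $\pi$ and $\delta>0$, one has $B_{r,\delta}(t)\subseteq I$ precisely when the generating function $f(|t|)$ of $B_{r,\delta}(t)$ lies in $I$, i.e.\ when $f(|\pi(t)|)=0$, i.e.\ when $\sigma_{A/I}(|\pi(t)|)\cap[0,\delta)=\emptyset$; as noted in the text, this says $\pi(t)$ is left invertible with $m(\pi(t))\ge\delta$. Symmetrically $B_{\ell,\delta}(t)\subseteq I$ iff $\sigma_{A/I}(|\pi(t)^*|)\cap[0,\delta)=\emptyset$. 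So $t$ is $\delta$--almost regular iff $B_{r,\delta}(t)$ and $B_{\ell,\delta}(t)$ are contained in exactly the same ideals, and $t\in R(A)$ iff this holds for arbitrarily small $\delta>0$. I will use two elementary facts throughout: if $a$ is left invertible and $\|a-b\|<m(a)$, then $b$ is left invertible with $m(b)\ge m(a)-\|a-b\|$; and if $\pi(t)\in GL(A/I)$, then $0$ lies in neither $\sigma(|\pi(t)|)$ nor $\sigma(|\pi(t)^*|)$, so the standard identity $\sigma(xx^*)\setminus\{0\}=\sigma(x^*x)\setminus\{0\}$ upgrades to $\sigma(|\pi(t)|)=\sigma(|\pi(t)^*|)$.

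For (iii): if $B_{r,\ep}(t)\subseteq I$ then $|\pi(t)|\ge\ep\b1$, and since $\||\pi(t)|-|\pi(t)^*|\|\le\||t|-|t^*|\|<\ep$ we get $|\pi(t)^*|>0$, hence $0\notin\sigma(|\pi(t)^*|)$ and therefore $\sigma(|\pi(t)^*|)=\sigma(|\pi(t)|)\subseteq[\ep,\infty)$, so $B_{\ell,\ep}(t)\subseteq I$. The hypothesis is symmetric in $t$ and $t^*$ (and $B_{r,\delta}(t^*)=B_{\ell,\delta}(t)$), so the converse implication holds as well, and (iii) follows.

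For (ii): pick $s\in R(A)$ with $\|t-s\|<\ep$, and let $I$ be any ideal with $B_{r,2\ep}(t)\subseteq I$, so $\pi(t)$ is left invertible with $m(\pi(t))\ge2\ep$. Then $\pi(s)$ is left invertible with $m(\pi(s))\ge2\ep-\|t-s\|>\ep$, so $B_{r,\ep}(s)\subseteq I$; since $s\in R(A)$ this forces $B_{\ell,\ep}(s)\subseteq I$, i.e.\ $\pi(s)$ is also right invertible, hence invertible, with $\|\pi(s)^{-1}\|=m(\pi(s))^{-1}<1/\ep$. As $\|\pi(t)-\pi(s)\|<\ep$, it follows that $\pi(t)\in GL(A/I)$, whence $\sigma(|\pi(t)^*|)=\sigma(|\pi(t)|)\subseteq[2\ep,\infty)$ and $B_{\ell,2\ep}(t)\subseteq I$. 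Applying this to $t^*$ (which swaps the $r$-- and $\ell$--sides and preserves $\mathrm{dist}(\,\cdot\,,R(A))$) gives the reverse implication, so $B_{r,2\ep}(t)$ and $B_{\ell,2\ep}(t)$ lie in the same ideals and $t$ is $2\ep$--almost regular.

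For (i): write $t=v|t|$ for the polar decomposition in $A^{**}$ and put $h(x)=\max\{1-\ep/x,0\}$ for $x>0$, $h(0)=0$, which is continuous on $[0,\infty)$; set $t'=t\,h(|t|)\in A$. Since $xh(x)=f(x):=\max\{x-\ep,0\}$ we have $t'=vf(|t|)$, and a brief polar-decomposition computation using $|t^*|=v|t|v^*$ gives $|t'|=f(|t|)$ and $|(t')^*|=f(|t^*|)$; also $t-t'=v(|t|-f(|t|))$ with $\||t|-f(|t|)\|\le\ep$, so $\|t-t'\|\le\ep$. Hence for every ideal $I$ and every $\delta>0$, $B_{r,\delta}(t')\subseteq I$ iff $\sigma(|\pi(t)|)\cap f^{-1}([0,\delta))=\sigma(|\pi(t)|)\cap[0,\ep+\delta)=\emptyset$ iff $B_{r,\ep+\delta}(t)\subseteq I$, and likewise for the $\ell$--side. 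Since $\ep+\delta>\ep$ and $t$ is $\ep$--almost regular (hence $(\ep+\delta)$--almost regular), $B_{r,\ep+\delta}(t)$ and $B_{\ell,\ep+\delta}(t)$ generate the same ideal, so $B_{r,\delta}(t')$ and $B_{\ell,\delta}(t')$ lie in the same ideals for all $\delta>0$; thus $t'\in R(A)$ and $\mathrm{dist}(t,R(A))\le\ep$. I expect the one delicate point — the main obstacle — to be exactly this choice of $f$ in (i): it must vanish on all of $[0,\ep]$, not merely near $0$, so that the shifted levels $\ep+\delta$ stay in the range where $\ep$--almost regularity of $t$ is available, while remaining close enough to $x\mapsto x$ to keep $\|t-t'\|\le\ep$; parts (ii) and (iii) are then bookkeeping on top of the spectral identity.
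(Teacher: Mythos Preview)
Your proof is correct and follows essentially the same approach as the paper's own proof. In part (i) you use the identical truncation $t'=vf_\ep(|t|)$ with $f_\ep(x)=\max(x-\ep,0)$ (presented via $t'=t\,h(|t|)$ to make membership in $A$ explicit) and the same identification $B_{r,\delta}(t')=B_{r,\delta+\ep}(t)$; in parts (ii) and (iii) you carry out directly what the paper does by contrapositive, with the same spectral-perturbation bookkeeping in quotients.
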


\begin{proof}
\begin{itemize}
\item[(i)] Let $f_{\ep}(x)=\max(x-\ep, 0)$. If $t=v|t|$ is the canonical polar decomposition in $A^{**}$, let $t'=v f_{\ep}(|t|)=f_{\ep}(|t^*|)v$. Then $\|t'-t\|\le \ep$. Clearly, for $\delta >0$, $B_{r,\delta}(t')=B_{r,\delta+\ep}(t)$ and $B_{\ell,\delta}(t')=B_{\ell,\delta+\ep}(t)$. Therefore these hereditary $C^*$--subalgebras generate the same ideal and $t'\in R(A)$.
\item[(ii)] Let $t'$ be an element of $R(A)$ such that $\|t'-t\|=\delta<\ep$. If $t$ is not $2\ep$--almost regular, then there is an ideal $I$ such that, with $\pi: A\to A/I$ the quotient map, either $\pi(t)$ is left invertible but not invertible and $m(\pi(t))\ge  2\ep$ or $\pi(t)$ is right invertible but not invertible and $m_*(\pi(t))\ge 2\ep$. Assume the former without loss of generality. Then $\pi(t')$ is left invertible and $m(\pi(t'))\ge 2\ep-\delta$. Therefore $\pi(t')$ is invertible and $m_*(\pi(t'))=m(\pi(t'))\ge 2\ep-\delta$. Since $2\ep -\delta>\delta$, it follows that $\pi(t)$ is invertible after all. (The proof shows directly that $m_*(\pi(t))\ge 2\ep-2\delta$.)
\item[(iii)] If $t$ is not $\ep$--almost regular, then there is an ideal $I$ such that, with $\pi:A\to A/I$ the quotient map, either $\pi(t)$ is left invertible but not invertible and $m(\pi(t))\ge\ep$ or $\pi(t)$ is right invertible but not invertible and $m_*(\pi(t))\ge\ep$. 
Assume the former and let $\delta=\| |\pi(t)|- |\pi(t^*)| \|<\ep$. 
Since $0\in \sigma(|\pi(t^*)|)$, then dist$(0, \sigma(|\pi(t)|))\le\delta$, a contradiction.
\end{itemize}
\end{proof}

\medskip\noindent
{\it Remark}. Part (i) of \cite [Proposition 4.2]{FR} is that dist$(t,R(A))\le \| |t|-|t^*| \|$. Part (iii) of the above Proposition is obviously inspired by this.

It is of course interesting to find conditions that imply $IR$ and to find how the property $IR$ propagates itself. It was pointed out in \cite{FR} that stable rank one implies $IR$ (Stable rank in this sense was introduced by Rieffel \cite{Ri} and stable rank one means that $GL(\tA)$ is dense in $\tA$.) and  (using \cite{R2}) that unital purely infinite simple $C^*$--algebras have $IR$. The latter will be generalized, using a $K$--theoretic approach, in \S 3 below. Lemma 4.3 of \cite{FR} shows that direct products (also known as $\ell^\infty$--direct sums) of $C^*$--algebras with $IR$ have $IR$, and the proof contains the assertion that $IR$ passes to ideals (see Remark 1.5 below for more on this). We proceed to generalize the latter result. Our proof relies on Proposition 2.2, but the result is included in this section for expository purposes.

\begin{lemma} (cf. \cite[Theorem 3.5]{BP}). Let $B$ be a proper hereditary $C^*$--subalgebra of a unital $C^*$--algebra $A$, and identify $B^+$ with $B+\bC\b1$. For $t$ in $\b1 +B$, if $t\in{\overline{GL(A)}}$, then $t\in{\overline{GL(B^+)}}$. 
\end{lemma}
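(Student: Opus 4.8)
The plan is to verify the $\epsilon$--$\delta$--free reformulation: given $\epsilon>0$ small, produce an element of $GL(B^+)$ within $3\epsilon$ of $t$. Since $t\in\overline{GL(A)}$, fix $s\in GL(A)$ with $\|s-t\|<\epsilon$, and write $b:=t-\b1\in B$. Let $(e_\lambda)$ be an approximate unit for $B$, put $f_\lambda:=(\b1-e_\lambda^2)^{1/2}\in B^+$ (a positive element commuting with $e_\lambda$, with $f_\lambda-\b1\in B$), and form the unitary $U_\lambda:=\begin{pmatrix} e_\lambda & -f_\lambda\\ f_\lambda & e_\lambda\end{pmatrix}\in M_2(B^+)$ (here $U_\lambda^*U_\lambda=U_\lambda U_\lambda^*=\b1$ because $e_\lambda^2+f_\lambda^2=\b1$). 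The idea is to conjugate the invertible element $M:=\begin{pmatrix} s & 0\\ 0 & \b1\end{pmatrix}$ of $M_2(A)$ by $U_\lambda$ and inspect the corners of $N_\lambda:=U_\lambda M U_\lambda^*$.

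A direct computation, using $s-\b1=b+(s-t)$ and $e_\lambda f_\lambda=f_\lambda e_\lambda$, gives $(N_\lambda)_{11}=e_\lambda se_\lambda+f_\lambda^2=\b1+e_\lambda(s-\b1)e_\lambda=:t'_\lambda\in\b1+B$, while $x_\lambda:=(N_\lambda)_{12}=e_\lambda(s-\b1)f_\lambda$, $y_\lambda:=(N_\lambda)_{21}=f_\lambda(s-\b1)e_\lambda$, and $(N_\lambda)_{22}-\b1=f_\lambda(s-\b1)f_\lambda$. Since $(e_\lambda)$ is an approximate unit for $B$ and $b\in B$, we have $e_\lambda be_\lambda\to b$, $\|bf_\lambda\|^2=\|b(\b1-e_\lambda^2)b^*\|\to 0$ and $\|f_\lambda b\|^2=\|b^*(\b1-e_\lambda^2)b\|\to 0$; so, fixing $\lambda$ far enough out, we may assume $\|t'_\lambda-t\|<2\epsilon$, $\|x_\lambda\|,\|y_\lambda\|<2\epsilon$, and $w_\lambda:=(N_\lambda)_{22}$ has $\|w_\lambda-\b1\|<1/2$, hence $w_\lambda\in GL(A)$ with $\|w_\lambda^{-1}\|\le 2$.

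The crux is to compare two expressions for the $(1,1)$--entry of $N_\lambda^{-1}$. Because $w_\lambda$ is invertible, the Schur complement formula gives $(N_\lambda^{-1})_{11}=\bigl(t'_\lambda-x_\lambda w_\lambda^{-1}y_\lambda\bigr)^{-1}$; in particular $s'':=t'_\lambda-x_\lambda w_\lambda^{-1}y_\lambda$ is invertible in $A$. On the other hand $N_\lambda^{-1}=U_\lambda M^{-1}U_\lambda^*=U_\lambda\begin{pmatrix} s^{-1} & 0\\ 0 & \b1\end{pmatrix}U_\lambda^*$, whose $(1,1)$--entry is $e_\lambda s^{-1}e_\lambda+f_\lambda^2=\b1+e_\lambda(s^{-1}-\b1)e_\lambda$. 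Since $B$ is hereditary, $e_\lambda(s^{-1}-\b1)e_\lambda\in e_\lambda Ae_\lambda\subseteq BAB\subseteq B$, so $(s'')^{-1}\in\b1+B$. Likewise $s''\in\b1+B$: $t'_\lambda\in\b1+B$, and a routine check (writing $f_\lambda=\b1+(f_\lambda-\b1)$ and repeatedly using $BAB\subseteq B$) shows $x_\lambda w_\lambda^{-1}y_\lambda\in B$. Thus $s''$ and $(s'')^{-1}$ both lie in $\b1+B\subseteq B^+$ and are mutual inverses in $A$, so $s''\in GL(B^+)$. Finally $\|s''-t\|\le\|x_\lambda\|\,\|w_\lambda^{-1}\|\,\|y_\lambda\|+\|t'_\lambda-t\|\le 8\epsilon^2+2\epsilon<3\epsilon$ for $\epsilon$ small, and since $\epsilon$ was arbitrary this proves $t\in\overline{GL(B^+)}$.

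I expect the one non-obvious step to be the realization that one should examine the $(1,1)$--corner of $N_\lambda^{-1}$ rather than of $N_\lambda$ itself: the corner $t'_\lambda$ of $N_\lambda$ need not be invertible, but reading the inverse of its small perturbation $s''$ off the alternative factorization of $N_\lambda^{-1}$ displays an inverse already lying in $B^+$, which is precisely what forces $s''\in GL(B^+)$; everything else is approximate-unit bookkeeping. A small point worth recording at the outset is that, $B$ being a proper hereditary subalgebra, no element of $B$ is invertible in $A$, so $B^+=B+\bC\b1$ really is a unital subalgebra of $A$ and $GL(A)\cap(\b1+B)=GL(B^+)$.
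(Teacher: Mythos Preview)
Your proof is correct, but the route differs substantially from the paper's. The paper (following \cite[Theorem 3.5]{BP}) works entirely inside $A$: writing $t=\b1+b$ and choosing $\b1+a\in GL(A)$ with $\|a-b\|$ small, it sets $d=\b1-(\b1+a-b)^{-1}$ and forms the single element
\[
c=(\b1+a-b)^{-1}(\b1+a)(\b1-db)^{-1}=\b1+b(\b1-db)^{-1},
\]
a product of invertibles which visibly lies in $\b1+B$ (via the Neumann series $\sum b(db)^n$) and is close to $t$. No matrices, no approximate unit --- just a clever algebraic identity.

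Your argument instead uses the $2\times2$ rotation trick with an approximate unit of $B$, then reads off the desired invertible as a Schur complement. This is more conceptual (the rotation device is a standard $K$-theory maneuver) and has the pleasant feature that the second expression for $(N_\lambda^{-1})_{11}$ exhibits $(s'')^{-1}$ concretely inside $\b1+B$, so you never invoke spectral permanence. That said, your closing remark that $GL(A)\cap(\b1+B)=GL(B^+)$ \emph{is} spectral permanence, and once you grant it the explicit computation of $(s'')^{-1}$ becomes unnecessary: knowing $s''\in(\b1+B)\cap GL(A)$ already suffices. The paper's proof is shorter and self-contained; yours is longer but arguably more transparent about \emph{why} the approximant can be pushed into $B^+$.
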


\begin{proof} The argument is just part of the proof of \cite[Theorem 3.5]{BP} and is included for the convenience of the reader. Let $t=\b1+b$, $b\in B$. Given $\ep>0$ choose $\delta >0$ such that
\[2\delta< 1,\  4\delta \|b\|<1,\  4\delta \|b\|^2<\ep.\] 
Find $a$ in $A$ such that $\b1+a\in GL(A)$ and $\|a-b\|<\delta$. Put $d=\b1-(\b1+a-b)^{-1}$, which is permissible since $\|a-b\|<1$, and let
\begin{align*}
c&= (\b1+a-b)^{-1}(\b1+a)(\b1-db)^{-1}\\
&=(\b1-d)(\b1+a-b+b)(\b1-db)^{-1}\\
&=(\b1 +(\b1-d)b)(\b1-db)^{-1}=\b1+b(\b1-db)^{-1}.
\end{align*}
Note that $\|d\|\le\delta(1-\delta)^{-1}<2\delta$ so that $\|db\|<\frac 12$. 
By construction $c\in GL(A)$. Also
\begin{align*}
\|c-(\b1+b)\|&=\|b(\b1-db)^{-1}-\b1)\|\\
&\le \|b\|^2 \|d\| (1-\|db\|)^{-1}<2\|b\|^2 \|d\|\\
&< 4\delta\|b\|^2<\ep.
\end{align*}
Finally, $c-\b1=b(\b1-db)^{-1}=\sum^\infty_0 b(db)^n\in B$. 
\end{proof}

\begin{lemma} Any hereditary $C^*$--subalgebra of an algebra with $IR_0$ has $IR_0$.
\end{lemma}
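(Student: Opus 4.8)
I would fix an arbitrary $t\in R(B^+)\cap(\b1+B)$ and aim to show $t\in{\overline{GL(B^+)}}$; this is exactly what $IR_0$ for $B$ requires. The plan is to run the chain $t\in R(A^+)\Longrightarrow t\in{\overline{GL(A^+)}}\Longrightarrow t\in{\overline{GL(B^+)}}$, so that the whole job reduces to proving $t\in R(A^+)$. The first step will be $IR_0$ for $A$, which applies since $t\in\b1+B\subseteq\b1+A$; the second will be Lemma 1.2 applied to the unital algebra $A^+$ and the subalgebra $B$, which is a proper hereditary $C^*$-subalgebra of $A^+$ because $B$ is hereditary in $A$ and $A$ is an ideal---hence a hereditary $C^*$-subalgebra---of $A^+$, and because the identification $B^+=B+\bC\b1$ used in Lemma 1.2 is precisely the one obtained by viewing $B^+$ as a subalgebra of $A^+$.

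To get $t\in R(A^+)$ I would write $t=\b1+b$ with $b\in B$ and regard $B^+=B+\bC\b1$ inside $A^+$. Then $|t|^2=\b1+(b+b^*+b^*b)$ and $|t^*|^2=\b1+(b+b^*+bb^*)$ lie in $B^+$, hence so do $|t|$ and $|t^*|$. By the remarks preceding Proposition 1.1 it suffices, for membership in $R$, to test $\ep$-almost regularity for arbitrarily small $\ep$, so I would restrict to $\ep<1$. For such $\ep$ and a continuous $f$ with $\{x:f(x)=0\}=[\ep,\infty)$ one has $f(1)=0$; since the augmentation $A^+\to\bC$ carries $|t|$ and $|t^*|$ to $1$, the elements $f(|t|)$ and $f(|t^*|)$, which plainly lie in $B^+$, are annihilated by the augmentation and so lie in $A\cap B^+=B$ (since $\b1\notin A$). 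Consequently $B_{r,\ep}(t)$ and $B_{\ell,\ep}(t)$ are the same whether formed in $B^+$ or in $A^+$---each is the hereditary $C^*$-subalgebra of $B$ generated by the relevant element, as $B$ is a hereditary $C^*$-subalgebra of both $B^+$ and $A^+$. Write $H_r=B_{r,\ep}(t)$ and $H_\ell=B_{\ell,\ep}(t)$.

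Now $t\in R(B^+)$ says that for this $\ep$ the algebras $H_r$ and $H_\ell$ generate the same ideal of $B^+$; since $H_r,H_\ell\subseteq B$ and $B$ is an ideal of $B^+$, this ideal is simply the ideal of $B$ generated by either one, so in particular $H_\ell$ lies in the ideal of $B$ generated by $H_r$, and symmetrically. As enlarging the ambient algebra can only enlarge the generated ideal, $H_\ell$ then lies in the ideal of $A^+$ generated by $H_r$ and vice versa; hence those two ideals of $A^+$ coincide, i.e.\ $I_+=I_-$ for $t$ at this $\ep$. Since this holds for every $\ep<1$, and such $\ep$ are arbitrarily small, $t\in R(A^+)$, which closes the argument.

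I expect the only real obstacle to be bookkeeping rather than substance: one must check that moving between $B^+$ and $A^+$ disturbs neither the hereditary subalgebras $B_{r,\ep}(t)$, $B_{\ell,\ep}(t)$ nor the ideals they generate. The device that makes this painless is the restriction to $\ep<1$, which forces $f(|t|)$ and $f(|t^*|)$ into $B$ itself; after that, transitivity of the relation ``is a hereditary $C^*$-subalgebra of'' and monotonicity of ``the ideal generated by'' under enlargement of the ambient algebra finish the job. (Once Proposition 2.2, the equivalence of $IR_0$ and $IR$, is available one could instead phrase everything in terms of $\tA$ and $\tB$, but the route above avoids needing it.)
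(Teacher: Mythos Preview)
Your argument is correct and follows the same three-step skeleton as the paper: show $R(B^+)\cap(\b1+B)\subset R(A^+)$, apply $IR_0$ for $A$, then invoke Lemma~1.2 with $A^+$ in place of $A$. The only cosmetic difference is in the first step: the paper verifies $R(B^+)\subset R(A^+)$ directly from the quotient definition (one-sided invertibility of an element in a unital $C^*$-subalgebra is the same as in the ambient algebra, since it is detected by invertibility of $t^*t$ or $tt^*$), whereas you route through the $B_{r,\ep}/B_{\ell,\ep}$ characterization; both are valid and yield the same conclusion.
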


\begin{proof} Let $A$ have $IR_0$ and $B$ be a hereditary $C^*$--subalgebra of $A$. Apply Lemma 1.2 with $A^+$ in the role of $A$. Since the image of $B^+$ in any quotient, $A^+/I$, of $A^+$ is a unital $C^*$--subalgebra of $A^+/I$, we see easily that $R(B^+)\subset R(A^+)$. The result is now clear.
\end{proof}

Combining this lemma with Proposition 2.2 below, we have:
\begin{proposition} Any hereditary $C^*$--subalgebra of a $C^*$--algebra with $IR$ also has $IR$.
\end{proposition}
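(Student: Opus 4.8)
The plan is simply to bolt together two facts that are (or will be) available: Lemma 1.4, which propagates $IR_0$ to hereditary $C^*$--subalgebras, and Proposition 2.2, which identifies $IR_0$ with $IR$ for an arbitrary $C^*$--algebra. Granting those, nothing genuinely new needs to be done here; the argument is a two--line sandwich.

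Concretely, I would argue as follows. Let $A$ be a $C^*$--algebra satisfying $IR$ and let $B$ be a hereditary $C^*$--subalgebra of $A$ (with no unitality hypotheses imposed on either). By Proposition 2.2 applied to $A$, the algebra $A$ satisfies $IR_0$. By Lemma 1.4, since $B$ is hereditary in $A$, it follows that $B$ satisfies $IR_0$. Applying Proposition 2.2 once more, this time to $B$, we conclude that $B$ satisfies $IR$, which is the assertion.

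The only point one should be mildly careful about is the bookkeeping with units that is hidden inside Lemma 1.4: ``$B$ hereditary in $A$'' is read literally, $B$ is then automatically a proper hereditary $C^*$--subalgebra of $A^+$ (since $B\subseteq A\subsetneq A^+$), and the image of $B^+$ in any quotient $A^+/I$ is a unital $C^*$--subalgebra of that quotient, which is what forces $R(B^+)\subseteq R(A^+)$ and permits the application of Lemma 1.2 with $A^+$ in the role of the ambient unital algebra. All of this, however, is already contained in the proof of Lemma 1.4, so it is invoked here as a black box. The real difficulty underlying the statement is not in this deduction at all: it lies entirely in Proposition 2.2, i.e.\ in ruling out, for non--unital $A$, the possibility that $R(\tA)\cap A$ fails to be contained in $\overline{GL(\tA)}$. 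Once that equivalence is established, Proposition 1.5 is immediate.
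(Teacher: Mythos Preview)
Your argument is correct and is exactly the paper's own proof: combine the lemma that $IR_0$ passes to hereditary $C^*$--subalgebras with Proposition 2.2 (equivalence of $IR$ and $IR_0$). The only slip is in the numbering---what you call ``Lemma 1.4'' and ``Proposition 1.5'' are Lemma 1.3 and Proposition 1.4 in the paper.
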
 

\begin{remark} The argument given in the proof of \cite [Lemma 4.3]{FR} for the fact that any ideal $I$ in a $C^*$--algebra with $IR$ also has $IR$ actually shows only that $I$ has $IR_0$. Of course this is remedied by our Proposition 2.2, but it is not hard to see, without using any results from the present paper, that \cite [Lemma 4.3]{FR} is correct as stated.
\end{remark}
It is easy to see, as in the proof of Lemma 1.3, that if $B$ is a unital $C^*$--subalgebra of $A$, then $R(B)\subset R(A)$. We have the following partial converse:

\begin{proposition} Let $B$ be a hereditary $C^*$--subalgebra of a unital $C^*$--algebra $A$. Then $(\b1+B)\cap R(A)\subset R(B+\bC\b1)$.
\end{proposition}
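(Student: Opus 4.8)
The plan is to use the concrete description of $R$ recorded just before Proposition 1.1: for a unital $C^*$--algebra $C$ and $s\in C$, one has $s\in R(C)$ if and only if $B_{r,\ep}(s)$ and $B_{\ell,\ep}(s)$ generate the same closed ideal of $C$ for arbitrarily small $\ep>0$. Write $t=\b1+b$ with $b\in B$, assume $t\in R(A)$, and set $D=B+\bC\b1$; the goal is to verify this ideal--equality inside $D$ for all sufficiently small $\ep$.

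The one substantive point is that, for $0<\ep\le 1$, the hereditary subalgebras $B_{r,\ep}(t)$ and $B_{\ell,\ep}(t)$ lie in $B$ and are the same whether formed in $A$, in $B$, or in $D$. Let $f$ be continuous with $\{x:f(x)=0\}=[\ep,\infty)$; since $\ep\le 1$ we have $f(1)=0$. Because $t^*t=\b1+c$ with $c=b^*+b+b^*b\in B$, functional calculus gives $f(|t|)=g(t^*t)=h(c)$, where $g(x)=f(\sqrt x)$ and $h(x)=g(1+x)$ are continuous on the relevant spectra and $h(0)=g(1)=f(1)=0$; hence $f(|t|)$ lies in the (non--unital) $C^*$--subalgebra generated by $c$, so $f(|t|)\in B$, and likewise $f(|t^*|)\in B$ since $tt^*=\b1+(b^*+b+bb^*)$. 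Since $B$ is hereditary in $A$, the hereditary subalgebra $B_{r,\ep}(t)=\overline{f(|t|)\,A\,f(|t|)}$ is contained in $B$; and as $B$ is also hereditary in $D$ (if $0\le d\le b$ with $b\in B_+$ and $d\in D$, then $d\in B$ because $B$ is hereditary in $A$), the standard fact that the hereditary subalgebra generated by a positive element does not depend on the ambient $C^*$--algebra shows that $B_{r,\ep}(t)$ coincides with the hereditary subalgebra of $B$, and of $D$, generated by $f(|t|)$. The same applies to $B_{\ell,\ep}(t)$.

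It then remains to move the ideal--equality from $A$ down to $D$. For arbitrarily small $\ep$, since $t\in R(A)$, the subsets $B_{r,\ep}(t)$ and $B_{\ell,\ep}(t)$ of $B$ generate the same ideal $I$ of $A$. By the ideal correspondence for the hereditary subalgebra $B\subseteq A$ --- for $X\subseteq B$, the ideal of $B$ generated by $X$ equals the intersection with $B$ of the ideal of $A$ generated by $X$ --- these two sets generate the same ideal $J$ of $B$. Finally, for any $X\subseteq B$ the ideal of $D$ generated by $X$ equals $\overline{D\,J_X\,D}$, where $J_X$ is the ideal of $B$ generated by $X$: indeed $\overline{DXD}$ is an ideal of $D$ containing $\overline{BXB}=J_X$, so it contains $\overline{D\,J_X\,D}$, while the reverse inclusion is immediate from $X\subseteq J_X$. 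Hence $B_{r,\ep}(t)$ and $B_{\ell,\ep}(t)$ generate the same ideal $\overline{DJD}$ of $D$ for all such $\ep$, and therefore $t\in R(D)=R(B+\bC\b1)$.

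The hard part here is really only the first step: recognizing that $f(|t|)$ and $f(|t^*|)$ land in $B$ whenever $f$ vanishes at $1$, so that the behavior of $t$ near its ``singular spectrum'' is entirely visible inside $B$. After that, the argument is just the standard bookkeeping with the lattice of ideals of a hereditary subalgebra --- the same circle of ideas already used in the proof of Lemma 1.3 and in \cite{BP} --- which I would cite rather than reprove.
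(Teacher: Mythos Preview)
Your argument is correct and follows the same route as the paper's proof: for $0<\ep<1$ the hereditary subalgebras $B_{r,\ep}(t)$ and $B_{\ell,\ep}(t)$ lie in $B$, and the standard ideal correspondence for hereditary $C^*$--subalgebras transfers the ideal--equality from $A$ to $B$ (equivalently, to $B+\bC\b1$). The paper compresses all of this into two sentences, but the content is the same; your functional--calculus justification that $f(|t|)\in B$ and your explicit unpacking of the ideal correspondence simply make the implicit steps visible.
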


\begin{proof} We may assume $B\ne A$. (So $B+\bC\b1$ can be identified with $B^+$. Note that $B$ could be a unital $C^*$--algebra, though not a unital subalgebra of $A$, so $B^+$ may not be the same as $\tB$.) If $t\in (\b1+B)\cap R(A)$ and if $0<\ep<1$, then $B_{r,\ep}(t)$ and $B_{\ell,\ep}(t)$ are both contained in $B$. If these generated distinct ideals of $B$, they would also generate distinct ideals of $A$.
\end{proof}

\begin{example} The last result is not true for elements of $B$; i.e., $B\cap R(A)$ need not be contained in $R(B+\bC\b1)$. To see this let $A_0$ be a non-unital purely infinite simple $C^*$--algebra, let $A=\tA_0$, and let $B=pAp$ for a non-zero projection $p$ in $A_0$. If $u$ is a proper isometry in $B$, then $u\not\in R(B+\bC\b1)$, since $B+\bC\b1\cong B\oplus\bC$, but $u\in R(A)$, since the only relevant quotient of $A$ is $A$ itself.
\end{example}

\begin{lemma} If $A$ is the direct limit of a directed family $\{B_i\}$ of hereditary $C^*$--subalgebras, and if each $B_i$ satisfies $IR_0$, then $A$ satisfies $IR_0$.
\end{lemma}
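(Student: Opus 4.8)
The plan is to show directly that every $t\in R(A^+)\cap(\b1+A)$ lies in $\overline{GL(A^+)}$. Fix such a $t=\b1+a$ (with $a\in A$) and $\ep>0$. If $B_i=A$ for some $i$ there is nothing to prove, so we may assume every $B_i$ is a \emph{proper} hereditary $C^*$--subalgebra of $A$; then $B_i$ is also hereditary in $A^+$, $B_i+\bC\b1$ may be identified with $B_i^+$, and $B_i^+$ is a unital $C^*$--subalgebra of $A^+$ (same unit $\b1$), so $GL(B_i^+)\subset GL(A^+)$. Choose $\delta>0$ small (how small, depending only on $\ep$ and $\|a\|$, will be fixed at the end), with $2\delta<1$; since $\bigcup_i B_i$ is dense in $A$, pick an index $i$ and $b\in B_i$ with $\|b-a\|<\delta$, and set $s=\b1+b\in\b1+B_i$. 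Because $t\in R(A^+)$ we have $\mathrm{dist}(s,R(A^+))\le\|s-t\|<\delta$, so Proposition 1.1(ii) shows $s$ is $2\delta$--almost regular in $A^+$.

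The element $s$ need not lie in $R(A^+)$, so the next step is to ``regularize'' it while staying inside $\b1+B_i$. Here I adapt the proof of Proposition 1.1(i): instead of the cutoff $\max(x-2\delta,0)$ I use the \emph{normalized} cutoff $f(x)=\max(x-2\delta,0)/(1-2\delta)$, which also satisfies $f(1)=1$, and I set $k(x)=f(x)/x$ for $x>0$, $k(0)=0$ (continuous, since $f$ vanishes on $[0,2\delta]$), and
\[
s'=s\,k(|s|)=v_s\,f(|s|),
\]
where $s=v_s|s|$ is the polar decomposition in $(A^+)^{**}$ (the second equality uses $xk(x)=f(x)$). As in the proof of Proposition 1.1(i) one gets $|s'|=f(|s|)$ and $|(s')^*|=f(|s^*|)$; and since $f$ collapses $[0,2\delta]$ to $0$ and is a homeomorphism of $[2\delta,\infty)$ onto $[0,\infty)$, one computes $B_{r,\gamma}(s')=B_{r,\,2\delta+(1-2\delta)\gamma}(s)$ and $B_{\ell,\gamma}(s')=B_{\ell,\,2\delta+(1-2\delta)\gamma}(s)$ for every $\gamma>0$. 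As $2\delta+(1-2\delta)\gamma>2\delta$ and $s$ is $2\delta$--almost regular, these two hereditary $C^*$--subalgebras generate the same ideal of $A^+$ for every $\gamma>0$; hence $s'\in R(A^+)$. The normalization $f(1)=1$ is what keeps $s'$ in $\b1+B_i$: writing $|s|^2-\b1=b+b^*+b^*b\in B_i$, the element $k(|s|)$ is obtained by applying to $|s|^2-\b1$ a continuous function whose value at $0$ equals $k(1)=1$, so $k(|s|)=\b1+d$ with $d\in B_i$, and therefore $s'=s\,k(|s|)=\b1+(b+d+bd)\in\b1+B_i$.

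Now $s'\in R(A^+)\cap(\b1+B_i)$, so Proposition 1.6 (with $A^+$ in the role of $A$ and $B_i$ in the role of $B$) gives $s'\in R(B_i+\bC\b1)=R(B_i^+)$; since $s'\in\b1+B_i$ and $B_i$ satisfies $IR_0$, we get $s'\in\overline{GL(B_i^+)}\subset\overline{GL(A^+)}$. It remains to estimate $\|s'-t\|\le\|s'-s\|+\|b-a\|$. Here $\|s'-s\|=\|\,s(k(|s|)-\b1)\,\|=\sup_{x\in\sigma(|s|)}|xk(x)-x|=\sup_{x\in\sigma(|s|)}|f(x)-x|$, and since $|f(x)-x|\le x$ for $x\le2\delta$ while $|f(x)-x|=2\delta|x-1|/(1-2\delta)$ for $x\ge2\delta$, and $\sigma(|s|)\subset[0,\,1+\|a\|+\delta]$, this is at most a constant (depending only on $\|a\|$) times $\delta$. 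Choosing $\delta$ small enough that $\|s'-t\|<\ep$, and letting $\ep\to0$, we conclude $t\in\overline{GL(A^+)}$, i.e. $A$ satisfies $IR_0$.

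I expect the main obstacle to be precisely the issue handled in the second paragraph. The naive route — approximate $a$ by $b\in B_i$ and hope $\b1+b\in R(A^+)$ — fails because $R(\cdot)$ is not stable under small perturbations; and the regularization of Proposition 1.1(i) applied verbatim produces $v_s f_{2\delta}(|s|)$, which need not lie in $\b1+B_i$ since the partial isometry $v_s$ need not belong to $B_i^+$. The device of a cutoff normalized so that $f(1)=1$, which rewrites $v_sf(|s|)$ as the product $s\,k(|s|)$ and thereby keeps it in $\b1+B_i$, is what makes the ``perturb into $B_i$, then regularize'' strategy work; the remaining ingredients (Proposition 1.1(ii) to recover almost regularity after perturbing, Proposition 1.6 together with the hypothesis on $B_i$ to finish inside $B_i^+$, and $GL(B_i^+)\subset GL(A^+)$) are routine.
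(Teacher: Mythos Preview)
Your proof is correct and follows essentially the same route as the paper's: approximate $t$ by $s=\b1+b\in\b1+B_i$, use Proposition 1.1(ii) to get $2\delta$--almost regularity, regularize with the \emph{normalized} cutoff $f(x)=\max(x-2\delta,0)/(1-2\delta)$ so that the result stays in $\b1+B_i$, then apply Proposition 1.6 and the $IR_0$ hypothesis on $B_i$. The paper writes the regularized element simply as $vf_{2\ep}(|t_0|)/(1-2\ep)$ and asserts without comment that it lies in $\b1+B_{i_0}$; your functional--calculus justification (via $|s|^2-\b1\in B_i$ and $k(1)=1$) fills in precisely that detail.
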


\begin{proof} Identify each $B_i^+$ with the unital subalgebra $B_i+\bC\b1$ of $A^+$. Let $t\in (\b1+A)\cap R(A^+)$. If $0<\ep<\frac 12$, there are $i_0$ and $t_0\in \b1+B_{i_0}$ such that $\|t_0-t\|<\ep$.
Therefore $t_0$ is $2\ep$--almost regular in $A^+$. 
Let $t_1=vf_{2\ep}(|t_0|)/(1-2\ep)$, where $t_0=v(|t_0|)$ is the canonical polar decomposition of $t_0$ in $(A^+)^{**}$ and $f_{2\ep}$ is as in the proof of Proposition 1.1.
Then $t_1\in R(A_+)\cap (\b1+B_{i_0})$ and $\|t_1-t_0\|\le 2\ep\|x\|/(1-2\ep)$. 
By Proposition 1.6 $t_1\in R(B_{i_0}^+)$. Then $t_1\in{\overline{GL(B_i^+)}}\subset {\overline{GL(A^+)}}$, whence \[\text{dist}(t,GL(A^+))<\ep+2\ep\|x\|/(1-2\ep).\]
Combining this with Proposition 2.2 below we have:
\end{proof}

\begin{proposition} If $A$ is the direct limit of an upward directed family $\{B_i\}$ of hereditary $C^*$--subalgebras, and if each $B_i$ satisfies $IR$, then $A$ satisfies $IR$. 
\end{proposition}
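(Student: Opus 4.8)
The plan is to combine Lemma 1.8 with the equivalence between $IR$ and $IR_0$ that is recorded in Proposition 2.2. Lemma 1.8 already performs the entire direct-limit argument, but phrased only for the formally weaker property $IR_0$; since Proposition 2.2 asserts that a $C^*$-algebra has $IR$ if and only if it has $IR_0$, the hypotheses and the conclusion of the present proposition transfer at once to and from those of Lemma 1.8. This is exactly the content of the sentence ``combining this with Proposition 2.2 below we have'' at the close of the proof of Lemma 1.8.

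Concretely, I would argue in three steps. First, since each $B_i$ has $IR$, Proposition 2.2 gives that each $B_i$ has $IR_0$. Second, $A$ is the direct limit of the upward directed family $\{B_i\}$ of hereditary $C^*$-subalgebras, each now known to satisfy $IR_0$, so Lemma 1.8 applies directly and shows that $A$ has $IR_0$. Third, a final appeal to Proposition 2.2, in the reverse direction, yields that $A$ has $IR$. No unitality assumption on $A$ or on the $B_i$ is needed: the non-unital definitions of $IR$ (via $\tA$) and of $IR_0$ (via $A^+$) are precisely what Proposition 2.2 reconciles.

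There is therefore essentially no obstacle at the level of this proposition itself; the substantive work lives in the two results being invoked. In Lemma 1.8 one approximates a given $t\in(\b1+A)\cap R(A^+)$ by some $t_0\in\b1+B_{i_0}$, passes to the rescaled truncation $vf_{2\ep}(|t_0|)/(1-2\ep)$ of the polar decomposition of $t_0$ so as to remain in $R(A^+)\cap(\b1+B_{i_0})$, uses Proposition 1.6 to descend to $R(B_{i_0}^+)$, and then exploits $\overline{GL(B_{i_0}^+)}\subset\overline{GL(A^+)}$; meanwhile Proposition 2.2 must carry out the non-unital/unital comparison of $IR$ and $IR_0$, which is the one genuinely delicate point. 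If one wished to bypass Proposition 2.2 and prove the $IR$ statement head-on, one would additionally have to control elements of $R(\tA)\cap A$ — exactly the gap between $IR_0$ and $IR$ — so routing through Lemma 1.8 and Proposition 2.2 is the natural path.
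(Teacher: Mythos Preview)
Your proposal is correct and follows exactly the paper's approach: the paper's proof of this proposition is nothing more than the sentence ``Combining this with Proposition 2.2 below we have'' at the end of the proof of Lemma 1.8, which is precisely the three-step reduction you spell out. The only minor remark is that your first step (passing from $IR$ to $IR_0$ for each $B_i$) does not actually require Proposition 2.2, since $IR_0$ is by definition formally weaker than $IR$; only the final step genuinely needs the equivalence.
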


The next result is of interest only in the non-unital case.  The term real rank zero was introduced in \cite{BP1}.  One of many equivalent conditions for real rank zero is that every hereditary $C^*-$subalgebra has an approximate identity consisting of projections.

\begin{corollary} If $A$ has an approximate identity of projections, in particular if $A$ is of real rank zero, then $A$ satisfies $IR$ if and only if $pAp$ satisfies $IR$ for each projection $p$ in $A$.
\end{corollary}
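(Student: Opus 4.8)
The plan is to deduce the corollary from Proposition 1.9 and Lemma 1.5 (equivalently Proposition 1.4). One direction is immediate: if $A$ satisfies $IR$, then since each $pAp$ is a hereditary $C^*$--subalgebra of $A$, Proposition 1.4 gives that $pAp$ satisfies $IR$. So the content is the reverse implication, and here the hypothesis that $A$ has an approximate identity consisting of projections is what I would use.

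First I would recall that, under this hypothesis, the family $\{pAp : p \text{ a projection in } A\}$ is an upward directed family of hereditary $C^*$--subalgebras of $A$ whose direct limit (equivalently, whose union, closed up) is $A$. Indeed, if $(e_\lambda)$ is an approximate identity of projections, then every element of $A$ is approximated by elements $e_\lambda a e_\lambda \in e_\lambda A e_\lambda$, so $\overline{\bigcup_\lambda e_\lambda A e_\lambda} = A$; and upward directedness follows because for two projections $p, q$ one can find a projection $e_\lambda$ in the approximate identity with $\|e_\lambda p - p\|, \|e_\lambda q - q\|$ as small as desired, and then (using a standard perturbation argument) a projection $r$ with $p, q \le r$ and $r$ in (or close enough to) the approximate identity, so that $pAp$ and $qAq$ are both contained in $rAr$. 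Since each $pAp$ satisfies $IR$ by hypothesis, Proposition 1.9 applies verbatim and yields that $A$ satisfies $IR$.

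The one point that needs a little care — and which I expect to be the main (minor) obstacle — is the upward-directedness of $\{pAp\}$: given arbitrary projections $p,q \in A$, one must produce a single projection $r \in A$ with $pAp \subseteq rAr$ and $qAq \subseteq rAr$. The cleanest route is: the hereditary $C^*$--subalgebra $C$ generated by $p$ and $q$ is $\sigma$--unital and of real rank zero (real rank zero passes to hereditary subalgebras), hence has an approximate identity of projections; choosing such a projection $r$ with $\|rp - p\|$ and $\|rq - q\|$ small enough forces $p \le r$ and $q \le r$ (two projections at distance $<1$ with one dominating a compression of the other can be compared, and in fact a projection $r$ with $\|rp-p\|<1$ satisfies $p \preceq r$; taking $r$ large enough in the approximate identity one gets $rp = p$ exactly in the limit, and a genuine containment can be arranged by passing slightly further out). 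Then $p, q \in rAr$, so $pAp, qAq \subseteq rAr$ as required. Alternatively, one can bypass directedness entirely by invoking Lemma 1.5: write $A = \varinjlim pAp$ as a direct limit of hereditary subalgebras with $IR_0$, conclude $A$ has $IR_0$, and then apply Proposition 2.2 to upgrade $IR_0$ to $IR$.

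Finally, for the parenthetical ``in particular if $A$ is of real rank zero'': this is just the quoted characterization of real rank zero (every hereditary $C^*$--subalgebra has an approximate identity of projections), applied to $A$ itself, so real rank zero is indeed a special case of the stated hypothesis and no separate argument is needed.
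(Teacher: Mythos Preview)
Your forward direction (via Proposition 1.4) is fine and matches the paper. The issue is in the reverse direction, specifically in your treatment of directedness.

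Your main argument for upward directedness of $\{pAp\}$ explicitly invokes the fact that the hereditary subalgebra generated by $p$ and $q$ has real rank zero. But the hypothesis of the corollary is only that $A$ has an approximate identity of projections, not that $A$ has real rank zero; the latter is merely the ``in particular'' case. Having an approximate identity of projections does \emph{not} pass to hereditary subalgebras in general (that implication is essentially the definition of real rank zero), so your perturbation argument producing a projection $r\ge p,q$ is unjustified outside the real rank zero case. Your ``alternative'' route does not actually bypass the problem either: writing $A=\varinjlim pAp$ as a direct limit still presupposes a directed index set, which is exactly what is in question. (Also, there is no Lemma~1.5 in the paper; you presumably mean Lemma~1.8.)

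The paper handles this more simply. It takes only the corners $B_i=e_iAe_i$ coming from the given approximate identity $\{e_i\}$ and observes that, even when $\{e_i\}$ is not increasing (so $\{B_i\}$ need not be directed and Proposition~1.9 does not apply as stated), the \emph{proof} of Lemma~1.8/Proposition~1.9 still goes through. Indeed, that proof only uses that for any $t\in\mathbf 1+A$ and any $\epsilon>0$ there exist an index $i_0$ and $t_0\in\mathbf 1+B_{i_0}$ with $\|t_0-t\|<\epsilon$; this follows immediately from $e_iae_i\to a$ for all $a\in A$, with no directedness required. So the fix is not to manufacture a dominating projection $r$, but to note that directedness was never needed in the argument you want to cite.
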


\begin{proof} Let $\{e_i\}$ be an approximate identity of $A$ consisting of projections and let $B_i=e_i A e_i$. Then the result follows from the proof of the proposition. (If $\{e_i\}$ is not increasing, then Proposition 1.9 does not apply directly but its proof still works.)
\end{proof}

\begin{corollary} Any purely infinite simple $C^*$--algebra satisfies $IR$.
\end{corollary}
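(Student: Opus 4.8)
The plan is to reduce to the unital case and then quote Corollary 1.10. Let $A$ be a purely infinite simple $C^*$--algebra. I would start from two standard external facts. First, by a theorem of Zhang every purely infinite simple $C^*$--algebra has real rank zero; in particular $A$ has an approximate identity consisting of projections, so the hypothesis of Corollary 1.10 is met. Second, for every nonzero projection $p\in A$ the corner $pAp$ is again a purely infinite simple $C^*$--algebra (a corner is a hereditary $C^*$--subalgebra, and hereditary $C^*$--subalgebras of purely infinite simple $C^*$--algebras are purely infinite simple), and of course $pAp$ is unital. Note also that $A$ genuinely contains nonzero projections, since every nonzero hereditary $C^*$--subalgebra of a purely infinite simple $C^*$--algebra contains one.

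The next step is to invoke the fact recalled earlier in this section --- due to Friis and R\o rdam, using \cite{R2} --- that every \emph{unital} purely infinite simple $C^*$--algebra satisfies $IR$. Applying this to the corners shows that $pAp$ satisfies $IR$ for every nonzero projection $p\in A$; the corner corresponding to $p=0$ is the zero algebra, which satisfies $IR$ trivially. Hence $pAp$ satisfies $IR$ for \emph{every} projection $p$ in $A$.

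Finally, Corollary 1.10 says precisely that, under the hypothesis just verified, $A$ satisfies $IR$ if and only if $pAp$ satisfies $IR$ for every projection $p\in A$. Since the latter holds, $A$ satisfies $IR$.

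There is no serious internal difficulty; the one substantive ingredient --- and hence the step I would flag as the ``obstacle'' --- is Zhang's theorem that purely infinite simple $C^*$--algebras have real rank zero, which is what makes Corollary 1.10 applicable. Given that, together with the $IR$ property of \emph{unital} purely infinite simple $C^*$--algebras from \cite{FR, R2}, the remainder is a formal consequence of results already established. A fully self-contained treatment would have to reprove the unital case; since \S 3 develops $K$--theoretic tools that yield a more general statement, the short reduction given here is the natural argument to record at this point.
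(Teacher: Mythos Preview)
Your argument is correct and is essentially the same as the paper's: reduce to the unital case via Corollary~1.10, using Zhang's theorem that purely infinite simple $C^*$--algebras have real rank zero, and then cite \cite{FR} (with \cite{R2}) for the unital case. The only difference is that you spell out explicitly why the corners $pAp$ are again unital purely infinite simple, which the paper leaves implicit.
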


\begin{proof} The non-unital case follows from the unital case, which is in \cite{FR}, and the above, since purely infinite simple $C^*$--algebras have real rank zero by \cite{Z}.
\end{proof}

It is of course true that the direct sum (also known as the $c_0$--direct sum) of $C^*$--algebras with $IR$ also has $IR$. This follows from \cite [Lemma 4.3]{FR} together with the assertion about ideals in $IR$ $C^*$--algebras contained in the proof of that lemma, or it can be deduced from the statement of \cite [Lemma 4.3]{FR} together with either Proposition 1.4 or Proposition 1.9.

The following proposition will be used in the proof of Theorem 3.2. Although Theorem 3.2 could be proved without it, the proposition may be interesting in its own right.

\begin{proposition} Let $A$ be a unital $C^*$--algebra, $\ep>0$, and $p$ a projection in $A$. If $t$ in $A$ is $\ep$--almost regular, then also $tp$ is $\ep$--almost regular. Also if $t\in R(A)$, then $tp\in R(A)$.
\end{proposition}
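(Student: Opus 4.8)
The plan is to work with the ideal description of $\ep$-almost regularity and to relate the relevant hereditary subalgebras of $tp$ to those of $t$. First I would observe that $|tp| = (pt^*tp)^{1/2}$, so $|tp| = p\,g(|t|^2)\,p$-type expressions are awkward; instead it is cleaner to work on the two sides separately. For the left side of $tp$: note $(tp)^*(tp) = p t^* t p$, so $|tp|$ is the positive square root of a compression of $|t|^2$, and the hereditary subalgebra $B_{r,\ep}(tp)$ sits inside $pAp$. For the right side, $(tp)(tp)^* = t p t^*$, and $|(tp)^*|^2 = t p t^* \le t t^* = |t^*|^2$. Since $0 \le tpt^* \le tt^*$, a standard fact about hereditary subalgebras gives that the hereditary subalgebra generated by $f(tpt^*)$ is contained in the hereditary subalgebra generated by $tt^*$ for any $f$ vanishing at $0$; more precisely, the ideal generated by $B_{\ell,\ep}(tp)$ is contained in the ideal generated by $\{x : \|(tt^*)^{1/2} - \text{(something)}\|\}$ — this needs care, so let me restructure.

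The cleaner route is via quotients. Let $I$ be an ideal of $A$ with quotient map $\pi : A \to A/I$, and suppose $\pi(tp)$ is one-sided invertible but not invertible in $A/I$; I want to derive a contradiction with $t$ being $\ep$-almost regular (or $t \in R(A)$), and moreover track the constant $m$. Suppose first $\pi(tp)$ is left invertible with $m(\pi(tp)) \ge \ep$. Then $\pi(p)$ is left invertible (since $\pi(tp)$ is), and a projection that is left invertible is the identity, so $\pi(p) = \b1$ in $A/I$; hence $\pi(tp) = \pi(t)\pi(p) = \pi(t)$, and we are back to a statement about $\pi(t)$. Thus if $\pi(tp)$ is left invertible with bound $\ge \ep$ but not invertible, then $\pi(t)$ is left invertible with $m(\pi(t)) \ge \ep$ but not invertible, contradicting $\ep$-almost regularity of $t$ (resp. $t \in R(A)$). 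Suppose instead $\pi(tp)$ is right invertible with $m_*(\pi(tp)) \ge \ep$, i.e. $\pi(p)\pi(t^*)$ is left invertible with bound $\ge \ep$. Then again $\pi(p)$ is left invertible, so $\pi(p) = \b1$, giving $\pi(tp) = \pi(t)$, and $\pi(t)$ is right invertible with $m_*(\pi(t)) \ge \ep$ but not invertible — again a contradiction. In either case both halves of the statement follow at once.

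The main obstacle — and it is really the only point requiring thought — is the elementary lemma that a projection $q$ in a unital $C^*$-algebra which is one-sided invertible must equal $\b1$: if $q$ is left invertible, then $\ker q = 0$ in any faithful representation, but $q$ is the projection onto its range, so $q = \b1$; symmetrically on the right. Once this is in hand, the argument above shows that any ideal witnessing failure of $\ep$-almost regularity (resp. of membership in $R(A)$) for $tp$ would witness the same for $t$, with the \emph{same} value of $\ep$, so no constant is lost. I would write the proof in the quotient language of the paragraph in Section 1 following the definition of $I_\pm$, assuming the localization of $\pi(p)$ to $\b1$ as the key step, and then simply cite $\ep$-almost regularity (resp. $R(A)$) of $t$ to conclude.
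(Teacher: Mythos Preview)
Your quotient approach is the right one and matches the paper's proof. The left--invertible case is fine and essentially identical to the paper's argument: $\pi(tp)$ left invertible forces $\pi(p)$ left invertible, hence $\pi(p)=\b1$, and the claim reduces to the hypothesis on $t$.

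The right--invertible case, however, contains a genuine error. You write that $\pi(tp)$ right invertible means $\pi(p)\pi(t^*)$ is left invertible, and ``then again $\pi(p)$ is left invertible''. But left invertibility of a product $cd$ gives left invertibility of the \emph{right} factor $d$, not of $c$: if $e(cd)=\b1$ then $(ec)d=\b1$. So from $\pi(p)\pi(t^*)$ left invertible you only get $\pi(t^*)$ left invertible, i.e.\ $\pi(t)$ right invertible --- you learn nothing directly about $\pi(p)$. There is no shortcut here; one cannot immediately collapse $\pi(p)$ to $\b1$.

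The paper's fix is exactly the two--step argument your error skips. From $\pi(tp)$ right invertible you get $\pi(t)$ right invertible, and from $\|\pi(p)\|\le 1$ together with $m_*(\pi(tp))\ge\ep$ you get $m_*(\pi(t))\ge\ep$ (since $\|\pi(t^*)v\|\ge\|\pi(p)\pi(t^*)v\|$). Now $\ep$--almost regularity of $t$ gives that $\pi(t)$ is invertible. \emph{Only then} can you conclude $\pi(p)=\pi(t)^{-1}\pi(tp)$ is right invertible, hence $\pi(p)=\b1$, hence $\pi(tp)=\pi(t)$ is invertible. The asymmetry between the two cases is real: in the right--invertible case the invertibility of $\pi(t)$ must be established before $\pi(p)$ can be pinned down.
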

\begin{proof} We need to show that if $I$ is an ideal such that $\pi(tp)$ is left invertible with $m(\pi(tp))\ge\ep$ or right invertible with $m_*(\pi(tp))\ge 0$, where $\pi: A\to A/I$ is the quotient map, then $\pi(tp)$ is invertible. In the first case, $\pi(p)$ is 
 also left invertible, whence $\pi(p)=\b1$. Then $\pi(t)$ is left invertible and $m(\pi(t))\ge\ep$. It follows that $\pi(t)$ is invertible, and therefore $\pi(tp)$ is invertible. In the second case, $\pi(t)$ is also right invertible. Since $\|\pi(p)\|\le 1$, we also conclude that $m_*(\pi(t))\ge\ep$. then $\pi(t)$ is invertible, whence $\pi(p)$ is right invertible, whence $\pi(p)=\b1$. 

The second statement follows, since $tp$ $\ep$--almost regular, $\forall\ep>0$, implies $tp\in R(A)$.
\end{proof}

\section{\textbf{Extensions and $IR$}}

\begin{lemma} Let $I$ be an ideal satisfying $IR_0$ in a unital $C^*$--algebra $A$ and $\pi: A\to A/I$ the quotient map. If $\ep>0$, $t$ is an $\ep$--almost regular element of $A$, and $\pi(t)$ is a liftable invertible, then dist$(t,GL(A))\le\ep$.
\end{lemma}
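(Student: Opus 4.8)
The plan is to produce, at the cost of an $\ep$-perturbation, an element to which the $IR_0$ hypothesis on $I$ applies directly. Suppose we can find $\tilde t\in A$ with $\|\tilde t-t\|\le\ep$ such that $\tilde t\in R(A)$ and $\pi(\tilde t)$ is again a liftable invertible. Choosing $s\in GL(A)$ with $\pi(s)=\pi(\tilde t)$, the element $r:=s^{-1}\tilde t$ lies in $\b1+I$. Left multiplication by the invertible $s^{-1}$ carries $R(A)$ into itself — in every quotient it preserves left invertibility, right invertibility, and hence the distinction between them — so $r\in R(A)\cap(\b1+I)$. Since an ideal is a hereditary $C^*$-subalgebra, Proposition 1.6 gives $r\in R(I^+)$, whereupon the $IR_0$ property of $I$ yields $r\in\overline{GL(I^+)}\subseteq\overline{GL(A)}$. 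Thus $\tilde t=sr\in\overline{GL(A)}$ and $\mathrm{dist}(t,GL(A))\le\|\tilde t-t\|\le\ep$. So everything comes down to constructing $\tilde t$.

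The natural candidate is $\tilde t=vf_\ep(|t|)$, where $f_\ep(x)=\max(x-\ep,0)$ and $t=v|t|$ is the polar decomposition in $A^{**}$. As in the proof of Proposition 1.1(i) one has $\|\tilde t-t\|\le\ep$; and since $B_{r,\delta}(\tilde t)=B_{r,\delta+\ep}(t)$ and $B_{\ell,\delta}(\tilde t)=B_{\ell,\delta+\ep}(t)$ for all $\delta>0$, $\ep$-almost regularity of $t$ makes these generate a common ideal, so $\tilde t\in R(A)$. It remains to see that $\pi(\tilde t)$ is a liftable invertible. As $\pi(t)$ is invertible, $\pi(v)$ is a unitary in $A/I$ and $\pi(\tilde t)=\pi(v)f_\ep(|\pi(t)|)$. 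If $m(\pi(t))>\ep$ then $f_\ep(|\pi(t)|)$ is a positive invertible, so $\pi(\tilde t)\in GL(A/I)$, and $\pi(\tilde t)\pi(t)^{-1}=\pi(v)\bigl(f_\ep(|\pi(t)|)\,|\pi(t)|^{-1}\bigr)\pi(v)^{*}$ is a unitary conjugate of a positive invertible, hence lies in $GL_0(A/I)$. Since the image of $GL(A)$ in $GL(A/I)$ is a union of cosets of $GL_0(A/I)$ and contains $\pi(t)$, it also contains $\pi(\tilde t)$; so in this case $\tilde t$ has all the required properties and we are done.

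The case $m(\pi(t))\le\ep$ is where the substance lies: now $f_\ep(|\pi(t)|)$ is not invertible, and in fact no member of $R(A)$ within $\ep$ of $t$ can have invertible image in $A/I$, so the reduction above cannot be carried out with $I$ alone. The relevant extra ideal is $J$, the ideal generated by $B_{r,\ep}(t)$; by $\ep$-almost regularity $J$ is also generated by $B_{\ell,\ep}(t)$, and since both hereditary subalgebras are contained in $J$, all functions in them supported near $0$ die under $\pi_J$, forcing $\sigma(|\pi_J(t)|)$ and $\sigma(|\pi_J(t)^{*}|)$ to miss $(0,\ep)$. Consequently $\pi_J(t)$ is invertible in $A/J$ with inverse of norm $\le 1/\ep$, and one checks the same for $\pi_J(\tilde t)$: thus $\tilde t\in R(A)$ is invertible modulo $J$, with its only obstruction to being a norm-limit of invertibles of $A$ living in $J$. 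Since $I\cap J$ is an ideal of $I$, hence a hereditary $C^*$-subalgebra of $I$, Lemma 1.3 shows $I\cap J$ also has $IR_0$. I would then finish by analysing $\tilde t$ over the two quotients $A/I$ and $A/J$ through the pullback $A/(I\cap J)\cong(A/I)\times_{A/(I+J)}(A/J)$, using the $IR_0$ properties of $I$ and of $I\cap J$ together with the liftability — not just invertibility — of $\pi(t)$. The delicate point, and the one I expect to be the main obstacle, is to carry liftability correctly through these quotients, i.e.\ to ensure that the invertibles to which $IR_0$ is ultimately applied are themselves liftable; as in the easy case, this should reduce to the fact that the image of $GL(A)$ in a quotient is a union of $GL_0$-cosets.
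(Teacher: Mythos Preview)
Your reduction in the first paragraph is correct, and the case $m(\pi(t))>\ep$ is handled cleanly. The gap is the case $m(\pi(t))\le\ep$, which you yourself flag as unfinished. Two concrete problems there. First, the assertion that ``one checks the same for $\pi_J(\tilde t)$'' is wrong: since $\sigma(|\pi_J(t)|)\subset[\ep,\infty)$, one gets $|\pi_J(\tilde t)|=f_\ep(|\pi_J(t)|)$ with spectrum in $[0,\infty)$, and $0$ lies in that spectrum whenever $\ep\in\sigma(|\pi_J(t)|)$; so $\pi_J(\tilde t)$ need not be invertible and the obstruction does not live entirely in $J$. Second, even to run your first-paragraph scheme with $I\cap J$ in place of $I$ you would need $\pi_{I\cap J}(t)$ (or some $\ep$-perturbation of it) to be a \emph{liftable} invertible; invertibility follows from that of $\pi_I(t)$ and $\pi_J(t)$, but liftability does not --- you only have $x\in GL(A)$ with $x-t\in I$, and nothing in the $GL_0$-coset observation produces a lift with $x-t\in I\cap J$. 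So the pullback sketch is not yet a proof.

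The paper sidesteps the whole difficulty by choosing a different perturbation. Instead of $\tilde t=vf_\ep(|t|)$, which changes the image modulo $I$, it takes $s=(\b1-e')t$ with $e'$ a carefully built element of $B_{\ell,\ep}(t)\cap I$ of norm at most $1$. Then $\pi(s)=\pi(t)$ automatically, so liftability is inherited for free, and $\|s-t\|=\|e't\|\le\ep$ since $e'$ lives under the spectral projection $\chi_{[0,\ep)}(|t^*|)$. All the work goes into constructing $e'$ (together with a companion $e\in B_{r,\ep}(t)\cap I$, linked to $e'$ through the polar partial isometry $v$) via approximate identities of $B_{r,\ep}(t)\cap I$ and $B_{\ell,\ep}(t)\cap I$, so that one can verify $s\in R(A)$ directly; the $\ep$-almost regularity of $t$ is precisely what makes the two sides match up. The idea your attempt is missing is to perturb \emph{within} $I$, so that the liftability hypothesis is preserved for free, and to spend the effort on arranging membership in $R(A)$ rather than on chasing liftability through auxiliary quotients.
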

\begin{proof} Let $x$ be an element of $GL(A)$ such that $\pi(xt)=\b1$. Choose $\delta$ such that $0<\delta<\ep$ and $\|\pi(t)^{-1}\|<1/\delta$. Then $B_{\ell,\delta}(t)$ and $B_{r,\delta}(t)$ are contained in $I$. Let $C_\ell=B_{\ell, \ep}(t)\cap I$ and $C_r=B_{r,\ep}(t)\cap I$. Since $B_{\ell, \ep}(t)$ and $B_{r, \ep}(t)$ generate the same ideal $J$, then $C_\ell$ and $C_r$ both generate the ideal $J\cap I$. Now let $y_+=f_1(|t|)$, $z_+=f_2(|t|)$, $y_-=f_1(|t^*|)$, and $z_-=f_2(|t^*|)$ where

\begin{align*}
 f_1(x)=&\left\{
 \begin{aligned}
  1,\quad & 0\le x\le\frac{\delta}3,\\
(\frac{2\delta}3-x)/(\frac{\delta}3),\quad & \frac{\delta}3\le x\le \frac{2\delta}3,\\
0,\quad & x\ge \frac{2\delta}3\\
\end{aligned}\right.
&
 f_2(x)=&\left\{
\begin{aligned} 
  1,\quad & 0\le x\le \frac{2\delta}3,\\
(\delta-x)/(\frac{\delta}3),\quad & \frac{2\delta}3 \le x\le \delta,\\
0,\quad & x\ge \delta.\\
\end{aligned}\right.
\end{align*}

Let $\{e_k\}$ be an increasing approximate identity of $C_r$ such that $e_k\ge z_+$, $\forall k$. This is possible, since we can take $e_k$ of the form $z_++(\b1-z_+)^{\frac 12} f_k(\b1-z_+)^{\frac 12}$. If $t=v|t|=|t^*|v$ is the canonical polar decomposition in $A^{**}$, we can define a similar approximate identity $\{e'_k\}$ of $C_\ell$ by $(\b1-e'_k)=v(\b1-e_k)v^*$. In particular $e'_k\ge z_-$ and $ve_k=e'_kv$. We claim that for $k$ sufficiently large, there is $\eta>0$ such that $y_-$ is in the ideal generated by $r$ for any $r$ with $\|r-e_k\|<\eta$ and $y_+$ is in the ideal generated by $r'$ for any $r'$ with $\|r'-e'_k\|<\eta$. To see the first note that $\|z_--\sum^n_1 a_ic_i b_i \| < 1$ for some $n$, $a_i$,$b_i\in A$, and $c_i\in C_r$. Thus for sufficiently large $k$, $\|z_--\sum^n_1 a_ie_kc_ib_i\|<1$, and the same is true for suitable $\eta$ if $e_k$ is replaced by $r$. Since $z_-y_-=y_-$, any ideal that contains $r$ must contain $y_-$. The same argument applies to the second assertion. Choose one such $k$ and $\eta$ and let $g$ be a continuous function such that $0\le g\le 1$, $g$ vanishes in a neighborhood of 0, $g(1)=1$, $\|g(e_k)-e_k\|<\eta$, and $\|g(e'_k)-e'_k\| < \eta$. Then choose a continuous function $h$ such that $h(0)=0$, $0\le h\le 1$, and $hg=g$. Let $e=h(e_k)$ and $e'=h(e'_k)$. Note that since $g(1)=1$, $1\ge e_k\ge z_+$, and $z_+y_+=y_+$, we have that $g(e_k)y_+=y_+$ and hence $ey_+=y_+$. Similarly $e'y_-=y_-$.

Now consider $s=v(\b1-e)|t|=(\b1-e')v|t|=(\b1-e')t$. We claim that $s\in R(A)$. First suppose $K$ is an ideal such that $s$ is left invertible modulo $K$. then $|t|$ is invertible modulo $K$ and hence $\b1-e$ is (left) invertible modulo $K$. Since $(\b1-e)y_+=0$, $y_+\in K$. Also since $(\b1-e)g(e_k)=0$ and the ideal generated by $g(e_k)$ contains $y_-$, $y_-\in K$. Since $y_\pm\in K$, $t$ is invertible modulo $K$, and hence $\b1-e'$ is left invertible modulo $K$. Since $\b1-e'$ is self--adjoint, $s$ is invertible modulo $K$. A similar argument shows that $s$ right invertible modulo $K$ implies $s$ invertible modulo $K$.

Now let $s_1=xs$. Then $s_1\in R(A)\cap (\b1+I)$, since $x$ is invertible, $xt\in \b1+I$, and $e'\in I$. By Proposition 1.6, $s_1\in R(I+\bC\b1)$. Since $I$ has $IR_0$, $s_1\in {\overline {GL(I+\bC\b1)}}\subset {\overline {GL(A)}}$. Therefore $s\in{\overline{GL(A)}}$. Since $\|e'\|\le 1$ and $e'\in B_{\ell,\ep}(t)$, $\|s-t\|\le \ep$.
\end{proof}

\begin{proposition} The properties $IR$ and $IR_0$ are equivalent.
\end{proposition}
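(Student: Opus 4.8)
The equivalence splits into a trivial implication and a substantial one. For $IR\Rightarrow IR_0$ there is nothing to do: in the unital case the two properties coincide by definition, and for non-unital $A$ one has $R(A^+)\cap(\b1+A)\subseteq R(\tA)\subseteq{\overline{GL(\tA)}}$ straight from the definition of $IR$. So the content is $IR_0\Rightarrow IR$, and here only the non-unital case needs an argument. Thus assume $A$ is non-unital with $IR_0$; the goal is $R(\tA)\subseteq{\overline{GL(\tA)}}$. Fix $t\in R(\tA)$; since $t\in R(\tA)$ we have $I_+=I_-$ for every $\ep>0$, i.e.\ $t$ is $\ep$--almost regular for every $\ep>0$. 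Write $t=a+\lambda\b1$ with $a\in A$, $\lambda\in\bC$.

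If $\lambda\neq0$, the conclusion is immediate from Lemma 2.1. Indeed $A$ is an ideal of the unital algebra $\tA$, it satisfies $IR_0$, and the image of $t$ under $\pi\colon\tA\to\tA/A\cong\bC$ is $\lambda$, an invertible element that is liftable (lift $\lambda$ to $\lambda\b1$). So for every $\ep>0$ Lemma 2.1 gives $\text{dist}(t,GL(\tA))\le\ep$, whence $t\in{\overline{GL(\tA)}}$.

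The case $\lambda=0$, i.e.\ $t=a\in A\cap R(\tA)$, is the heart of the matter, and the place I expect the argument to require genuine care. Here $\pi(a)=0$ is not invertible in $\tA/A$, so Lemma 2.1 cannot be used with the ideal $A$; moreover one cannot repair this by a small perturbation, since adding even a tiny scalar to $a$ typically takes it out of $R(\tA)$ (it may become left-- or right--invertible but not invertible already in $\tA$, as happens for the proper isometry of Example 1.7). My plan is instead to apply Lemma 2.1 with the ideal $J$ of $\tA$ generated by $B_{r,\ep}(a)$; because $a$ is $\ep$--almost regular this equals the ideal generated by $B_{\ell,\ep}(a)$, and modulo $J$ the element $a$ becomes invertible with $\|\pi_J(a)^{-1}\|\le1/\ep$. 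Two things must then be checked. First, $J$ should satisfy $IR_0$. Since $0\in\sigma(|a|)$ (no nonzero element of the ideal $A$ is invertible in $\tA$) and $B_{r,\ep}(a)$ is generated by an element $f(|a|)$ with $f(0)\neq0$, the algebra $J$ contains an element with nonzero scalar part, so $J\not\subseteq A$; hence $J+A=\tA$ and $J/(J\cap A)\cong\tA/A\cong\bC$. As $J\cap A$ is an ideal of $A$ it satisfies $IR_0$ by Lemma 1.3, so it remains to know that an extension of $\bC$ by an algebra with $IR_0$ again has $IR_0$ — a statement one would attack by a further application of Lemma 2.1, the point being that the relevant quotient of such an extension by $J\cap A$ is $(J/(J\cap A))^{+}\cong\bC\oplus\bC$, whose invertible group is connected, so that all its invertibles are automatically liftable. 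Second, the invertible $\pi_J(a)\in\tA/J$ must lift to an invertible of $\tA$; here one uses the isomorphism $\tA/J\cong A/(A\cap J)$ coming from $A+J=\tA$, together with the fact that $a$ is an honest element of $A$, to control the relevant index class in $K_0(J)$. Granting these, Lemma 2.1 gives $\text{dist}(a,GL(\tA))\le\ep$ for every $\ep>0$, so $a\in{\overline{GL(\tA)}}$ and $A$ has $IR$.

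In short, the trivial implication and the case $\lambda\neq0$ are routine from the definitions and Lemma 2.1; the obstacle is the case $t\in A\cap R(\tA)$, where one must establish both that the ideal $J$ through which $a$ becomes invertible inherits $IR_0$ (via the extension $0\to J\cap A\to J\to\bC\to0$) and that the resulting invertible in $\tA/J$ is liftable — a non-stable $K$--theory point — possibly after first replacing $a$ by a suitable element within distance $\ep$ of it.
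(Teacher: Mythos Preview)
Your handling of the trivial direction and of the case $\lambda\neq 0$ is fine (though for $\lambda\neq 0$ one can simply observe that $t/\lambda\in(\b1+A)\cap R(\tA)$ and invoke $IR_0$ directly). The real issue is the case $\lambda=0$, where your proposal contains a genuine gap---and, ironically, the gap lies precisely in the sentence where you reject the correct idea.

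You write that ``one cannot repair this by a small perturbation, since adding even a tiny scalar to $a$ typically takes it out of $R(\tA)$.'' That is true but irrelevant: Lemma 2.1 does \emph{not} ask that $t\in R(\tA)$, only that $t$ be $\ep$--almost regular. Almost regularity is stable under small perturbations by Proposition 1.1(ii). This is exactly the paper's argument: given $s\in A\cap R(\tA)$ and $\ep>0$, set $t=s+\ep\b1$. Then $\|t-s\|=\ep$ with $s\in R(\tA)$, so $t$ is $3\ep$--almost regular; its image in $\tA/A\cong\bC$ is $\ep$, an invertible that lifts to $\ep\b1$. Lemma 2.1 (with $I=A$) gives $\text{dist}(t,GL(\tA))\le 3\ep$, hence $\text{dist}(s,GL(\tA))\le 4\ep$. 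Done.

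Your alternative route through the ideal $J$ generated by $B_{r,\ep}(a)$ is not merely longer; it has holes. First, you correctly note $J\not\subseteq A$, hence $J+A=\tA$; but then $J$ may well equal $\tA$ itself (this always happens when $A$ is simple), and in that case ``$J$ has $IR_0$'' is, for unital algebras, literally the statement ``$\tA$ has $IR$'' you are trying to prove. Second, the liftability of $\pi_J(a)$ is waved off as ``a non-stable $K$--theory point''; you give no argument, and none is available at this stage of the paper---good index theory for $IR$ algebras appears only in \S 3 and itself relies on Proposition 2.2.
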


\begin{proof} Let $A$ be a non-unital $C^*$--algebra with $IR_0$. Since $(\tA\setminus A)\cap R(\tA)\subset {\overline{GL(\tA)}}$, it is enough to show $A\cap R(\tA)\subset {\overline{GL(\tA)}}$. Thus let $s\in A\cap R(\tA)$, $\ep>0$, and $t=s+\ep\b1$. Then $t$ is $3\ep$--almost regular, and the image of $t$ modulo $I$ is a liftable invertible. Then Lemma 2.1 implies dist$(t, GL(A))\le 3\ep$. Therefore dist$(s, GL(A))\le 4\ep$. Since $\ep$ is arbitrary, the result follows.
\end{proof}

\begin{corollary} Let $A$ be a unital $C^*$--algebra, $I$ an ideal with $IR$, and $\pi:A\to A/I$ the quotient map. For $t\in A$, $ t\in {\overline{GL(A)}}$ if and only if $t\in R(A)$ and $\pi(t)\in {\overline{\pi(GL(A))}}$.
\end{corollary}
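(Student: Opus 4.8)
The plan is to derive Corollary 2.3 from the preceding results, with Lemma 2.1 (now available in its full strength via Proposition 2.2, so that ``$IR_0$'' may be replaced by ``$IR$'') doing the main work. One direction is immediate: if $t\in\overline{GL(A)}$, then every quotient of $A$ sends $t$ into the closure of the invertibles, so in particular $t\in R(A)$ and $\pi(t)\in\overline{\pi(GL(A))}$, since $\pi(GL(A))\subseteq GL(A/I)$ and approximation is preserved by the (contractive) map $\pi$.

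For the converse, suppose $t\in R(A)$ and $\pi(t)\in\overline{\pi(GL(A))}$. Fix $\ep>0$. First I would use the hypothesis on $\pi(t)$ to find $x\in GL(A)$ with $\|\pi(x)-\pi(t)\|$ small, say less than $\ep$; then $\pi(t-x)\in I$, so after replacing $t$ by $t-(x - \text{something})$ — more precisely, by perturbing $t$ within distance $\ep$ to an element $t'$ with $\pi(t')$ a \emph{liftable invertible} — we reduce to the situation of Lemma 2.1. The cleanest way: set $t' = t - (x' )$ where $x'\in A$ lifts $\pi(t)-\pi(x)\in I$ with small norm and... actually it is simpler to note that $\pi(t')$ being in the image of $GL(A)$ together with $\|\pi(t)-\pi(t')\|<\ep$ lets us write $\pi(t') = \pi(x)$ for some $x\in GL(A)$, i.e., $\pi(t')$ is a liftable invertible. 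Since $t\in R(A)$ and $t'$ is a small perturbation of $t$, Proposition 1.1(ii) shows $t'$ is $2\ep$--almost regular (or one starts from an exactly $R(A)$ element via Proposition 1.1(i)). Now Lemma 2.1 applies to $t'$: $\operatorname{dist}(t',GL(A))\le 2\ep$, hence $\operatorname{dist}(t,GL(A))\le 3\ep$. Letting $\ep\to0$ gives $t\in\overline{GL(A)}$.

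The one point requiring care — and the main obstacle — is the reduction to a \emph{liftable} invertible modulo $I$ while simultaneously keeping the element in $R(A)$ (or almost regular) and staying within $O(\ep)$ of $t$. The issue is that the perturbation witnessing $\pi(t)\in\overline{\pi(GL(A))}$ lives in $A/I$ and must be lifted to $A$ without destroying almost-regularity. I expect this is handled exactly as in the proof of Proposition 2.2: given $x\in GL(A)$ with $\|\pi(x)-\pi(t)\|<\ep$, choose $y\in A$ lifting $\pi(x)-\pi(t)$ with $\|y\|<\ep$ and set $t' = t+y\in\pi^{-1}(\pi(x))$; then $\|t'-t\|<\ep$, $\pi(t')=\pi(x)$ is a liftable invertible, and since $t\in R(A)$, Proposition 1.1(ii) gives that $t'$ is $2\ep$--almost regular. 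Everything else is a direct invocation of Lemma 2.1 and a triangle inequality, so once this bookkeeping is in place the corollary follows.
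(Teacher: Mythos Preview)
Your argument is correct and, once the exposition settles down in your final paragraph, it is essentially identical to the paper's proof: pick $x\in GL(A)$ with $\|\pi(x)-\pi(t)\|<\ep$, lift $\pi(x)-\pi(t)$ to $y$ with $\|y\|<\ep$, set $t'=t+y$ so that $\pi(t')=\pi(x)$ is a liftable invertible and $t'$ is $2\ep$--almost regular by Proposition~1.1(ii), then apply Lemma~2.1 (legitimate since $IR\Leftrightarrow IR_0$ by Proposition~2.2) and the triangle inequality. The only suggestion is to streamline the presentation by deleting the false starts in the middle paragraph.
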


\begin{proof} The necessity is obvious. If the conditions are satisfied, choose $\ep>0$ and $s$ in $GL(A)$ such that $\|\pi(s)-\pi(t)\|<\ep$. Then choose $t'$ in $A$ such that $\|t'-t\|<\ep$ and $\pi(t')=\pi(s)$. Then $t'$ is $2\ep$--almost regular. Thus Lemma 2.1 implies dist$(t', GL(A))\le 2\ep$, whence dist$(t, GL(A))<3\ep$.
\end{proof}

\begin{lemma} If $I$ is an ideal of a unital $C^*$--algebra $A$ and $\ep>0$, then any element of $R(A/I)$ has an $\ep$--almost regular lift.
\end{lemma}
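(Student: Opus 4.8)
The plan is to fix the quotient map $\pi\colon A\to A/I$, pick any lift $t_0\in A$ of $\bar t$, and then perturb $t_0$ inside $I$ until it is $\ep$-almost regular. The only place the hypothesis enters is the following remark, valid for every lift $s\in A$ of $\bar t$: if $I_+$ and $I_-$ are the ideals of $A$ generated by $B_{r,\ep}(s)$ and by $B_{\ell,\ep}(s)$, then $I_++I=I_-+I$. Indeed, in $A/(I_++I)$ the image of $s$ is left invertible with $m\ge\ep$, since $I_+$ is the smallest ideal with that property and $I_+\subseteq I_++I$; but $A/(I_++I)$ is also a quotient of $A/I$, where the image of $s$ equals that of $\bar t$, and $R$ passes to quotients, so that image lies in $R(A/(I_++I))$ and, being left invertible, is invertible, hence right invertible with $m_*\ge\ep$; by minimality $I_-\subseteq I_++I$, and the reverse inclusion is symmetric. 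Consequently it suffices to produce a lift $t$ with $I\subseteq I_+$ and $I\subseteq I_-$: for such $t$ one gets $I_+=I_++I=I_-+I=I_-$, i.e.\ $B_{r,\ep}(t)$ and $B_{\ell,\ep}(t)$ generate the same ideal.

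So the task is to find a lift $t$ with $(\ep\b1-|t|)_+$ and $(\ep\b1-|t^*|)_+$ each generating an ideal containing $I$; equivalently, for every ideal $K$ with $I\not\subseteq K$, the image of $t$ in $A/K$ should be neither left invertible with $m\ge\ep$ nor right invertible with $m_*\ge\ep$. To arrange this I would cut $t_0$ down near $I$. Normalising $\|t_0\|=\|\bar t\|=:M$ and taking a quasicentral approximate identity $\{u_\lambda\}$ of $I$, set $t_\lambda=(\b1-u_\lambda)^{1/2}\,t_0\,(\b1-u_\lambda)^{1/2}$, again a lift of $\bar t$. Quasicentrality yields $t_\lambda^*t_\lambda=(\b1-u_\lambda)t_0^*t_0(\b1-u_\lambda)+E_\lambda$ and $t_\lambda t_\lambda^*=(\b1-u_\lambda)t_0t_0^*(\b1-u_\lambda)+E_\lambda'$ with $\|E_\lambda\|,\|E_\lambda'\|\to 0$; since $t_0^*t_0,t_0t_0^*\le M^2\b1$, in any quotient $A/K$ the images of $t_\lambda^*t_\lambda$ and $t_\lambda t_\lambda^*$ are dominated, up to a vanishing error, by $M^2$ times the square of the image of $\b1-u_\lambda$, so their least spectral value is at most $M^2(1-\|\bar u_\lambda\|)^2+o(1)$, with $\bar u_\lambda$ the image of $u_\lambda$ in $A/K$. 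Thus, for $\lambda$ large, every ideal $K$ with $I\not\subseteq K$ on which $\|\bar u_\lambda\|$ is close to $1$ is handled.

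The difficulty — and what I expect to be the technical heart — is that one $u_\lambda$ only controls the ideals on which it is large, and no single positive element of $I$ need be large in every quotient of $I$ (already $I\cong c_0$ shows this). So the construction should be run as an iteration along an increasing exhaustion of $I$: after cutting $t_0$ down by $u_\lambda$, the still-uncontrolled ideals all contain the ideal generated by the part of $u_\lambda$ above a fixed threshold, these ideals increase to $I$, and one repeats the cutting-down on the residual quotient, choosing the thresholds and the sizes of the error terms so small that the successive corrections — all supported in $I$ — form a Cauchy sequence. The limit $t\in A$ is still a lift of $\bar t$, and the construction can be tuned so that $(\ep\b1-|t|)_+$ and $(\ep\b1-|t^*|)_+$ generate ideals containing $I$; by the first paragraph, $t$ is then $\ep$-almost regular. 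The delicate points are that a later cut must not undo the control gained at earlier stages (here quasicentrality, together with the fact that each correction lies in $I$, should keep the interference small) and that the limit exists in norm; alternatively one can try to feed the exhaustion of $I$ into the direct-limit arguments of Proposition 1.9 and Corollary 1.10.
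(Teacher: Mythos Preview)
Your reduction in the first paragraph --- that $I_+(s)+I=I_-(s)+I$ for any lift $s$, so it would suffice to find a lift $t$ with $I\subseteq I_+(t)$ and $I\subseteq I_-(t)$ --- is logically valid as a sufficient condition, but it is \emph{not always achievable}, and this is a genuine gap. Take $A=\ell^\infty$, $I=c_0$, $\bar t=2\cdot\b1\in A/I$, and $\ep=1$. For any lift $t=(t_n)$ one has $|t_n|\to 2$, so $(\ep-|t|)_+$ has finite support and the ideal it generates is finite--dimensional; in particular $I\not\subseteq I_+(t)$ for \emph{every} lift $t$. Thus no iteration of your cutting--down procedure can converge to a lift with the property you are aiming for. (Of course the lemma is trivially true in this example: $t=2\cdot\b1$ is $\ep$--almost regular since $|t|=|t^*|$. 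The point is that your intermediate target is strictly stronger than what is needed and can fail.) More generally, whenever $\bar t$ is invertible with $\|\bar t^{-1}\|^{-1}>\ep$, every lift has $B_{r,\ep}(t)\subset I$ generating a proper ideal of $I$ whenever $I$ is not singly generated as an ideal of $A$.

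The paper's proof starts from essentially the same ingredients --- an arbitrary lift $t$ and a quasicentral approximate identity $\{e_i\}$ for $I$ --- and even uses the same observation $J_++I=J_-+I$ (at level $\ep/2$). The crucial difference is that it does \emph{not} try to kill one--sided invertibility of $t(\b1-e_i)$ in every quotient $A/L$ with $I\not\subseteq L$. Instead it fixes a single sufficiently large $i$ and argues directly: if $t(\b1-e_i)$ is left invertible with $m\ge\ep$ modulo some $L$, then automatically $L\supset J_+$, and moreover $\b1-e_i$ is bounded below modulo $L$, which (via a suitably chosen element $z_-\in I$ with $f_2(|t^*|)\in z_-+J_+$ and $\|(\b1-e_i)z_-\|$ small) forces $f_1(|t^*|)\in L$; hence $t$ is invertible modulo $L$, and so is $t(\b1-e_i)$. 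No iteration is needed because the analysis is restricted to those ideals $L$ where one--sided invertibility \emph{does} occur, rather than trying to rule it out everywhere outside $I$. This is exactly the idea your approach is missing.
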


\begin{proof} Let $t$ be an arbitrary lift of an element of $R(A/I)$ and let $\{e_i\}$ be an increasing quasi--central approximate identity of $I$. We claim that $t(\b1-e_i)$ is $\ep$--almost regular for $i$ sufficiently large. First choose $\delta$ with $0<\delta<\ep/2$ and choose $i_0$ such that $i\ge i_0\Rightarrow \|[t,e_i]\|<\delta$. Then choose $\gamma>0$ such that $\gamma\|t\| /(\ep-\delta)\le 1$. Finally choose continuous functions $f_1$,$f_2$ such that $0\le f_i\le 1$, $f_1=1$ in some neighborhood of 0, $f_2f_1=f_1$, and $f_i$ is supported in $[0, \ep/2)$. 

Now let $J_+$ be the ideal generated by $B_{r,\ep/2}(t)$ and $J_-$ the ideal generated by $B_{\ell,\ep/2}(t)$. Since $t$ maps to an element of $R(A/I)$, $J_++I=J_-+I$. Therefore $f_2(|t^*|)\in J_++I$, and we can write $f_2(|t^*|) \in z_-+J_+$ for some positive $z_-$ in $I$. (This follows because the image of $f_2(|t^*|)$ modulo $J_+$ is a positive element of $I/I\cap J_+$.) Similarly write $f_2(|t|)\in z_++J_-$ for some positive element $z_+$ of $I$. Then choose $i_1\ge i_0$ such that for $i\ge i_1$, $\|(\b1-e_i)z_{\pm}\|<\gamma$. 

Fix $i\ge i_1$ and suppose $L$ is an ideal such that $\pi(t(\b1-e_i))$ is left invertible and $m(\pi(t(\b1-e_i)))\ge\ep$, where $\pi:A\to A/L$ is the quotient map. Then since $\|[t, e_i]\|<\delta$, $m(\pi((\b1-e_i)t))>\ep-\delta\ge \ep/2$. Thus $L\supset J_+$. Also $m(\pi(\b1-e_i))\ge \ep / \|t\|$. 
Since $\|(\b1-e_i)z_-\|<\gamma$, it follows that $\|\pi(z_-)\|<\gamma\|t\| / \ep\le 1$. But $\pi(z_-)=\pi(f_2(|t^*|))$ and $\pi(f_2(|t^*|))\pi(f_1(|t^*|))=\pi(f_1(|t^*|))$. Hence $f_1(|t^*|)$ is in $L$, which implies $B_{\ell,\mu}(t)\subset L$ for some $\mu>0$. So $\pi(t)$ is invertible, whence $\pi(t(\b1-e_i))$ is invertible, whence $m_*(\pi(t(\b1-e_i)))\ge \ep$. 

Similarly, we can show that if $\pi(t(\b1-e_i))$ is right invertible and $m_*(\pi(t(\b1-e_i)))\ge\ep$, then $\pi(t(\b1-e_i))$ is invertible and hence $m(\pi(t(\b1-e_i)))\ge\ep$. The small differences are that now we directly see that $m_*(\pi(t))\ge\ep$ and $m_*(\pi(\b1-e_i))\ge(\ep-\delta) / \|t\|$. (The latter since $\|t(\b1-e_i)-(\b1-e_i) t\|<\delta$.) 

The results of the above two paragraphs show that $t(\b1-e_i)$ is $\ep$--almost regular.
\end{proof}

\medskip\noindent
{\it Remark}. The same techniques show that any $\ep_1$--almost regular element of $A/I$ has an $\ep$--almost regular lift if $\ep>\ep_1$. 

\begin{theorem} Let $I$ be a closed two--sided ideal of a $C^*$--algebra $A$. Then $A$ has $IR$ if and only if both $I$ and $A/I$ have $IR$ and invertibles lift from $\tA/I$ to $\tA$.
\end{theorem}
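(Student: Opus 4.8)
The plan is to reduce to the unital case and then read everything off Corollary 2.3, Lemma 2.4 and Proposition 1.1. First I would dispose of the non‑unital bookkeeping. Passing from $A$ to $\tA$ changes neither whether $A$ has $IR$, nor $I$, nor the hypothesis that invertibles lift from $\tA/I$ to $\tA$, and it replaces $A/I$ by $\tA/I\cong (A/I)^+$; and $(A/I)^+$ has $IR$ if and only if $A/I$ does (when $A/I$ is non‑unital this is the definition, while if $A/I$ is unital then $(A/I)^+\cong (A/I)\oplus\bC$, one implication being Proposition 1.4 and the other \cite[Lemma 4.3]{FR}, using that $\bC$ has stable rank one hence $IR$). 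So I may assume $A$, hence also $A/I$, unital; write $\pi:A\to A/I$, so that ``invertibles lift'' reads $\pi(GL(A))=GL(A/I)$. I would also record the elementary fact that $\pi(GL(A))$ is an open subgroup of $GL(A/I)$ --- open because $\pi$ is an open map and $GL(A)$ is open in $A$ --- hence also relatively closed in $GL(A/I)$, so that $\overline{\pi(GL(A))}\cap GL(A/I)=\pi(GL(A))$.

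For the ``if'' direction, assume $I$ and $A/I$ have $IR$ and $\pi(GL(A))=GL(A/I)$, and let $t\in R(A)$. For $I\subseteq J$ the map $A/I\to A/J$ composed with $\pi$ is the quotient map $A\to A/J$, so since $t\in R(A)$ rules out one‑sided‑but‑not‑invertible images of $t$ in every quotient of $A$, it does so in every quotient of $A/I$; thus $\pi(t)\in R(A/I)$. As $A/I$ has $IR$, $\pi(t)\in\overline{GL(A/I)}=\overline{\pi(GL(A))}$. Corollary 2.3, applicable since $I$ has $IR$, now gives $t\in\overline{GL(A)}$; hence $A$ has $IR$.

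For the ``only if'' direction, assume $A$ has $IR$. An ideal is a hereditary $C^*$‑subalgebra, so $I$ has $IR$ by Proposition 1.4. For the remaining two assertions, let $t\in R(A/I)$ --- the case of an invertible $u\in GL(A/I)$ is included since $GL(A/I)\subseteq R(A/I)$. Given $\ep>0$, Lemma 2.4 produces an $\ep$‑almost regular lift $\widehat t\in A$ of $t$; by Proposition 1.1(i) there is $t'\in R(A)$ with $\|t'-\widehat t\|\le\ep$, and $t'\in\overline{GL(A)}$ because $A$ has $IR$. Choosing $v\in GL(A)$ with $\|v-t'\|<\ep'$, the element $\pi(v)\in\pi(GL(A))$ is within $\ep+\ep'$ of $t$; letting $\ep,\ep'\to0$ gives $t\in\overline{\pi(GL(A))}\subseteq\overline{GL(A/I)}$. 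Running this over all $t\in R(A/I)$ shows $A/I$ has $IR$; taking $t=u\in GL(A/I)$ gives $u\in\overline{\pi(GL(A))}\cap GL(A/I)=\pi(GL(A))$ by the clopen remark, i.e.\ every invertible of $A/I$ lifts.

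All the real work sits in the earlier results --- above all Lemma 2.4 (existence of almost regular lifts) and Corollary 2.3 --- so within this proof the only points needing genuine care are the non‑unital reduction of the first paragraph and the observation that $\pi(GL(A))$ is clopen in $GL(A/I)$; the latter is exactly what promotes the approximate conclusion ``$u\in\overline{\pi(GL(A))}$'' to the sharp statement that invertibles lift. That bookkeeping is where I would expect to have to be careful, but I do not anticipate a serious obstacle beyond it.
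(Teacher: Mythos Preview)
Your proof is correct and follows essentially the same route as the paper's: both directions rest on Lemma 2.4, Proposition 1.1, and the circle of ideas around Lemma 2.1/Corollary 2.3. The only cosmetic differences are that you invoke Corollary 2.3 directly for the ``if'' direction whereas the paper applies Lemma 2.1 inline, and for the lifting of invertibles you use the clean observation that $\pi(GL(A))$ is an open (hence closed) subgroup of $GL(A/I)$, while the paper argues the same point concretely via $\pi(t'_n)s^{-1}\to\b1$.
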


\begin{proof} By replacing $A$ with $\tA$ if necessary, we may assume $A$ is unital. First assume $A$ has $IR$. Then $I$ has $IR$ by Proposition 1.4. If $s\in R(A/I)$, then $s$ can be lifted to an $\ep$--almost regular element $t$ of $A$, by Lemma 2.4, Then dist$(t, GL(A))\le\ep$ by Proposition 1.1 and the $IR$ property for $A$. Therefore dist$(s,GL(A/I))\le \ep$. Since $\ep$ is arbitrary, this implies $A/I$ has $IR$. To show that invertibles lift, we again use Lemma 2.4 (though we could also manage with \cite [Proposition 4.2(ii)]{FR}). If $s\in GL(A/I)$, find a sequence $\{t_n\}$ of lifts such that $t_n$ is $\ep_n$--almost regular and $\ep_n\to 0$. Then find $t'_n\in GL(A)$ such that $\|t'_n-t_n\|\to 0$. Then if $\pi: A\to A/I$ is the quotient map, $\pi(t'_n)\to s$, whence $\pi(t'_n)s^{-1}\to\b1$, whence $\pi(t'_n)s^{-1}$ is a liftable invertible for large $n$. Since $\pi(t'_n)$ is a liftable invertible by construction, then $s$ is liftable.

Now assume that $I$ and $A/I$ have $IR$ and invertibles lift. Let $t\in R(A)$. Then $\pi(t)\in R(A/I)$ (as observed in the proof of \cite [Lemma 4.3]{FR}), since every quotient of $A/I$ is also a quotient of $A$. So we can find $s_n$ in $GL(A/I)$ with $s_n\to \pi(t)$. Choose lifts $t_n$ of $s_n$ such that $t_n\to t$. Then each $\pi(t_n)$ is a liftable invertible. Also $t_n$ is $\ep_n$--almost regular, where $\ep_n\to 0$, by Proposition 1.1. Hence the hypothoses of Lemma 2.1 are satisfied and dist$(t_n, GL(A))\to 0$. It follows that $t\in {\overline{GL(A)}}$.
\end{proof}

\section{\textbf{Non--stable $K$--theory and $IR$}}

A thorough treatment of $K-$theory of $C^*-$algebras can be found in \cite{Bl}.  We provide here a very brief introduction for the benefit of the reader.

If $p$ and $q$ are projections in a $C^*$--algebra $A$, we say $p$ and $q$ are Murray--von Neumann equivalent, in symbols $p\sim q$, if there is $u$ in $A$ such that $u^*u=p$, $uu^*=q$. For a unital $C^*-$algebra $A$ let $V_n(A)$ be the set of Murray--von Neumann equivalence classes of projections in $A\otimes\bM_n$, where $\bM_n$ is the algebra of $n\times n$ matrices.  Then let $V(A)$ be the direct limit of the $V_n(A)$'s, where the map from $V_n(A)$ to $V_{n+1}(A)$ is given by $[p] \mapsto [\begin{pmatrix} p & 0\\0 & 0\end{pmatrix}]$.  Here we are identifying $A\otimes\bM_n$ with the algebra of $n\times n$ matrices over $A$.  Then $V(A)$ becomes an abelian semigroup under the operation $[p] + [q] = [\begin{pmatrix} p & 0\\0 &q\end{pmatrix}]$.  Then $K_0(A)$ is the Grothendieck group of $V(A)$.  This means that $K_0(A)$ is generated by $V(A)$ and that $[p]=[q]$ in $K_0(A)$ if and only if $[p] + [r]=[q] + [r]$ in $V(A)$ for some $[r]$.  It can be shown that it is enough to consider $r=\b1_n$ for sufficiently large $n$, where $\b1_n$ is the identity of $A\otimes\bM_n$.  If $A$ is non--unital, then $K_0(A)$ is the kernel of the natural map from $K_0(\tA)$ to $K_0(\bC)$.

For unital $A$ let $GL_n(A)=GL(A\otimes \bM_n)$.  Then $K_1(A)$ is the direct limit of the groups of homotopy classes in $GL_n(A)$ relative to  the maps $u\mapsto \begin{pmatrix} u & 0\\0 & \b1\end{pmatrix}$ from $GL_n$ to $GL_{n+1}$.  For $A$ non--unital $K_1(A)=K_1(\tA)$.  (Note that $K_1(\bC)=0$.)

One of the key features of $K-$theory is the exact sequence which exists whenever $I$ is an ideal of $A$.  This is a cyclic six term sequence which includes the natural maps from $K_*(I)$ to $K_*(A)$ and $K_*(A)$ to $K_*(A/I)$ as well as two boundary maps.  One boundary map goes from $K_1(A/I)$ to $K_0(I)$ and the other from $K_0(A/I)$ to $K_1(I)$.

It follows from $p\sim q$ that $p$ and $q$ generate the same ideal of $A$. The next theorem applies to arbitrary $C^*$--algebras with $IR$ and does not explicitly mention $K$--theory.

\begin{theorem} Let $p$ and $q$ be projections in a $C^*$--algebra $A$ with $IR$. If $p$ and $q$ generate the same ideal and if $(\b1-p)\sim(\b1-q)$ in $A^+$, then $p\sim q$.
\end{theorem}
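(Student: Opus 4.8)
The plan is to run the property $IR$ not on a manufactured candidate for a $p$-to-$q$ partial isometry, but on the element the hypothesis already gives us. Since $(\b1-p)\sim(\b1-q)$ in $A^+$, fix $w\in A^+$ with $w^*w=\b1-p$ and $ww^*=\b1-q$. First I would check that $w\in R(A^+)$. Let $J$ be an ideal of $A^+$ with quotient map $\pi$. If $\pi(w)$ is left invertible, then $\pi(w)^*\pi(w)=\b1-\pi(p)$ is invertible, and since $\pi(p)$ is a projection this forces $\pi(p)=0$, i.e. $p\in J$. Then $q$ lies in the ideal of $A$ generated by $p$, which is contained in $J$, so $\pi(q)=0$, whence $\pi(w)\pi(w)^*=\b1-\pi(q)=\b1$ and $\pi(w)$ is invertible. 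The case where $\pi(w)$ is right invertible is symmetric. Thus $w\in R(A^+)$; this is the only place the ``same ideal'' hypothesis is used.

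Next, since $A$ has $IR$ so does $A^+$: for non-unital $A$ this is the definition, and for unital $A$ one has $A^+\cong A\oplus\bC$, a finite direct sum of algebras with $IR$ (recall $\bC$ has stable rank one, hence $IR$). Consequently $w\in R(A^+)\subset\overline{GL(A^+)}$.

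The final step extracts the equivalence. Given a small $\ep>0$, choose $t\in GL(A^+)$ with $\|t-w\|<\ep$ and let $u=t(t^*t)^{-1/2}$ be the unitary in the polar decomposition of $t$, so that $u(t^*t)u^*=tt^*$. Since $\|w\|\le 1$, both $t^*t$ and $tt^*$ lie within a fixed multiple of $\ep$ of $w^*w=\b1-p$ and of $ww^*=\b1-q$ respectively; conjugating the first estimate by $u$ and combining with the second gives that $\|u(\b1-p)u^*-(\b1-q)\|$, i.e. $\|upu^*-q\|$, is bounded by a fixed multiple of $\ep$, hence is $<1$ once $\ep$ is small. Two projections at distance less than $1$ are Murray--von Neumann equivalent, so $upu^*\sim q$; combined with $p\sim upu^*$ (conjugation by a unitary) and transitivity this gives $p\sim q$. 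The implementing partial isometry automatically lies in $A$, since $p,q\in A$ and $A$ is an ideal of $A^+$.

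There is no deep obstacle here; the one thing to get right is the choice of element. It is tempting to build a proxy for the desired partial isometry — something like $qp+w$ — but showing such an element lies in $R(A^+)$ leads to genuine case analysis about how $p$ and $q$ sit inside the various quotients of $A^+$. The cleaner route is to observe that $w$ itself already lies in $R(A^+)$, and that its defining relations $w^*w=\b1-p$, $ww^*=\b1-q$ are exactly what the polar-decomposition-of-a-nearby-invertible argument needs in order to produce the near-conjugacy $\|upu^*-q\|<1$.
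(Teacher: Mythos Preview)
Your argument is correct and follows the paper's route: take the partial isometry $w$ with $w^*w=\b1-p$, $ww^*=\b1-q$, check $w\in R(A^+)$ via the common-ideal hypothesis exactly as you do, and use $IR$ to place $w$ in $\overline{GL(A^+)}$. The only difference is the endgame. The paper invokes a theorem of R\o rdam to extend $w$ to a unitary $v$ of $A^+$ with $v(\b1-p)=w$, so that $vp$ directly implements $p\sim q$. You instead approximate $w$ by an invertible, pass to the unitary $u$ in its polar decomposition, and obtain $\|upu^*-q\|<1$; this avoids the external citation at the modest cost of appealing to the standard fact that projections at distance less than $1$ are Murray--von Neumann equivalent. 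Both finishes are short; yours is more self-contained, the paper's yields the implementing partial isometry in one stroke.
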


\begin{proof} Choose $u$ in $A^+$ such that $u^*u=\b1-p$ and $uu^*=\b1-q$. Then $u\in R(A^+)$ and hence $u\in{\overline {GL(A^+)}}$. By R\o rdam \cite{R1}, this implies that there is a unitary $w$ in $A^+$ such that $w(\b1-p)=u$. Then $wp$ is a partial isometry in $A$ which implements the Murray--von Neumann equivalence between $p$ and $q$.
\end{proof}

The converse of this theorem is valid in the real rank zero case.

\begin{theorem} Let $A$ be a $C^*$--algebra of real rank zero. Then $A$ has $IR$ if and only if whenever $p$ and $q$ are projections in $A$ generating the same ideal, then $(\b1-p)\sim(\b1-q)$ in $A^+$ implies $p\sim q$. 
\end{theorem}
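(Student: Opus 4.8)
The forward implication is Theorem 3.1, which does not even use real rank zero, so the real content is the converse: granted that $A$ has real rank zero and the stated cancellation property for projections, one must show $A$ has $IR$. My first step would be to reduce to the unital case. Real rank zero gives an approximate identity of projections, so by Corollary 1.10 it suffices to show every corner $pAp$ has $IR$; each such corner is unital, has real rank zero, and inherits the hypothesis (if $p_1,p_2\le p$ generate the same ideal of $pAp$ then $p_2\in\overline{Ap_1A}$ and $p_1\in\overline{Ap_2A}$, so they generate the same ideal of $A$; a partial isometry in $pAp$ realizing $p-p_1\sim p-p_2$, together with $\b1-p$, realizes $\b1-p_1\sim\b1-p_2$ in $A^+$; and any equivalence $p_1\sim p_2$ in $A$ stays inside $pAp$). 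So assume $A$ unital, fix $t\in R(A)$ and $\ep>0$, and aim to find an element of $GL(A)$ within $2\ep$ of $t$.

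The heart of the proof is to use real rank zero to cut off the small part of $|t|$ cleanly. Set $b=(\ep\b1-|t|)_+$, so that $B_{r,\ep}(t)=\overline{bAb}$ and, crucially, $\ep\b1-b=\min(|t|,\ep\b1)$. Take a projection $e$ far enough along an approximate identity of projections of $\overline{bAb}$. Then $e\le\chi_{[0,\ep)}(|t|)$, so $\|te\|\le\ep$, while from $|t|^2\ge(\ep\b1-b)^2$ and the fact that $\|(\b1-e)b(\b1-e)\|$ can be made as small as desired, one obtains the operator inequality $(\b1-e)|t|^2(\b1-e)\ge\tfrac{\ep^2}{2}(\b1-e)$. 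Put $s=t(\b1-e)$. Then $\|s-t\|\le\ep$; $s\in R(A)$ by Proposition 1.12; and $s$ has a polar decomposition $s=w|s|$ in which $|s|=(s^*s)^{1/2}$ is invertible in the corner $(\b1-e)A(\b1-e)$ (in fact $|s|\ge\tfrac{\ep}{\sqrt2}(\b1-e)$), $w=s|s|^{-1}\in A$ is a partial isometry with $w^*w=\b1-e$, and $q:=ww^*$ is a projection of $A$.

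I would then read off the consequence of $s\in R(A)$. Since $|s|$ is invertible in its corner and $se=0$, for every quotient $\pi\colon A\to A/I$ the element $\pi(s)$ is left invertible exactly when $\pi(e)=0$ and right invertible exactly when $\pi(q)=\b1$; hence $s\in R(A)$ forces $e$ and $\b1-q$ to generate the same ideal of $A$. Applying the cancellation hypothesis to the pair $e$, $\b1-q$ --- they generate the same ideal, and $w$ realizes $\b1-e\sim q$, i.e.\ the equivalence $\b1-e\sim\b1-(\b1-q)$ needed to invoke the hypothesis --- gives $z\in A$ with $z^*z=e$ and $zz^*=\b1-q$. Then $u:=w+z$ is a unitary of $A$: the cross terms vanish because $w$ and $z$ have orthogonal initial projections ($\b1-e\perp e$) and orthogonal final projections ($q\perp\b1-q$), so $u^*u=(\b1-e)+e=\b1$ and $uu^*=q+(\b1-q)=\b1$. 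Since $u(\b1-e)=w$ we get $s=u|s|$, and since $|s|+\ep e\in GL(A)$ with $\||s|+\ep e-|s|\|=\ep$, the element $u(|s|+\ep e)$ is invertible and within $\ep$ of $s$, hence within $2\ep$ of $t$. As $\ep$ was arbitrary, $t\in\overline{GL(A)}$; and as $t\in R(A)$ was arbitrary, $A$ has $IR$.

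The step I expect to be the main obstacle is constructing the cut-off projection $e$: it must both annihilate $|t|$ below level $\ep$ and keep $|t|$ uniformly bounded below on $\b1-e$, and the naive candidate --- a spectral projection of a finite-spectrum approximant of $|t|$ --- will not work, because spectral projections are not norm-continuous. Passing through $b=(\ep\b1-|t|)_+$ and using $\ep\b1-b=\min(|t|,\ep\b1)$ is exactly the device that converts the norm-smallness $\|(\b1-e)b(\b1-e)\|\to0$, which real rank zero does supply, into the operator inequality needed for $(\b1-e)|t|^2(\b1-e)$. After that the argument is just bookkeeping with partial isometries; the conceptual point is that cutting an element of $R(A)$ by a suitable projection turns it into a weighted partial isometry whose kernel and cokernel projections generate the same ideal, and that is precisely the configuration the cancellation hypothesis is designed to resolve.
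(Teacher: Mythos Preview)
Your proof is correct and follows the same overall strategy as the paper: cut $t$ by a projection so that the product has closed range, invoke Proposition~1.12 to keep it in $R$, read off that the kernel and cokernel projections generate the same ideal, apply the cancellation hypothesis, and complete the resulting partial isometry to a unitary.

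The technical implementations differ in three places, and it is worth recording what each buys. First, the paper handles the non--unital case by proving $IR_0$ for $A$ (working with $t\in(\b1+A)\cap R(A^+)$) and then appealing to Proposition~2.2, whereas you pass to unital corners via Corollary~1.10; both are legitimate, and your reduction is a bit more self--contained. Second, to produce the cutting projection the paper cites the interpolation result \cite{B} to obtain $p'$ with $\chi_{[\ep,\infty)}(|t|)\le p'$ and $p'\chi_{[0,\ep/2]}(|t|)=0$, which immediately forces $tp'$ to have closed range; you instead take $e$ far along an approximate identity of projections for $\overline{bAb}$ with $b=(\ep\b1-|t|)_+$, and derive the lower bound $(\b1-e)|t|^2(\b1-e)\ge\tfrac{\ep^2}{2}(\b1-e)$ from $|t|^2\ge(\ep\b1-b)^2$ together with $0\le(\b1-e)b(\b1-e)\le\|(\b1-e)b\|\,(\b1-e)$. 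This is a nice device that avoids the external citation. Third, for the endgame the paper invokes R\o rdam's criterion from \cite{R1} (produce a unitary agreeing with the polar part on $\chi_{[\ep,\infty)}(|t|)$), while you exhibit the invertible $u(|s|+\ep e)$ directly; again your route is slightly more elementary, at the cost of a little extra bookkeeping. None of these differences is conceptual: the ``weighted partial isometry whose kernel and cokernel generate the same ideal'' picture you describe is exactly the mechanism the paper uses.
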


\begin{proof} For the not already proved direction, let $t\in (\b1+A)\cap R(A^+)$. Let $t=v|t|$ be the canonical polar decomposition in $(A^+)^{**}$. By \cite{R1}, to prove $t\in {\overline {GL(A^+)}}$, it is sufficient to show that for each $\ep$ with $0<\ep<1$, there is a unitary $u$ in $A^+$ such that $u\chi_{[\ep,\infty)}(|t|)=v\chi_{[\ep,\infty)}(|t|)$. Now by \cite{B} there is a projection $p'$ in $A^+$ such that $p'\ge\chi_{[\ep,\infty)} (|t|)$ and $p'\chi_{[0,\ep/2]}(|t|)=0$.
Let $s=tp'$. Then $s$ has closed range, and hence $s$ has a canonical polar decomposition, $s=w|s|$, with $w$ a partial isometry in $A^+$. Let $q'=ww^*$, $p=\b1-p'$, and $q=\b1-q'$. Then $p$ is the kernel projection of both $s$ and $w$, and $q$ is the cokernel projection of both $s$ and $w$. Note that $p$ and $q$ are in $A$, since $w\not\in A$. By Proposition 1.12 $s\in R(A^+)$. Therefore $p$ and $q$ generate the same ideal. By construction, $(\b1-p) \sim (\b1-q)$. So by hypothesis $p\sim q$. If $w_0^*w_0=p$ and $w_0w_0^*=q$, then $u=w_0+w$ is a unitary and $u\chi_{[\ep,\infty)}(|t|)=up'\chi_{[0,\ep)}(|t|)=w\chi_{[0,\ep)}(|t|)=v\chi_{[0,\ep)}(|t|)$. 
\end{proof}

Theorem 3.2 is a generalization of the previously noted fact that purely infinite simple $C^*$--algebras have $IR$. In fact Cuntz proved in \cite{C} that any two non-zero projections in a purely infinite simple $C^*$--algebra are equivalent if they have same image in $K_0$. Also purely infinite simple $C^*$--algebras have real rank zero by \cite{Z}.

A $C^*$--algebra $A$ will be said to have {\it stable $IR$} if $A\otimes\bM_n$ has $IR$ for all $n$. It follows from Theorem 2.5 that in the non-unital case, $\tA\otimes\bM_n$ has $IR$ (not just $(A\otimes\bM_n)^\sim$). By Proposition 1.9 stable $IR$ implies that $A\otimes\bK$ has $IR$, where $\bK$ is the $C^*$--algebra of compact operators on a separable infinite dimensional Hilbert space. Conversely if $A\otimes\bK$ has $IR$, then Proposition 1.4 implies $A$ has stable $IR$. Thus stable $IR$ is invariant under stable isomorphism. 
In \cite{BP2} a $C^*$--algebra $A$ was said to have {\it weak cancellation} if the following is true: Whenever $p$ and $q$ are projections in $A$ which generate the same ideal $I$ and which have the same image in $K_0(I)$, then $p\sim q$. Rieffel showed in \cite{Ri} that $C^*$--algebras of stable rank one satisfy a stronger cancellation property: If $p$ and $q$ have the same image in $K_0(A)$, then $p\sim q$. Of course $p\sim q$ implies that $p$ and $q$ generate the same ideal $I$ and have the same image in $K_0(I)$. We show next that stable $IR$ implies a cancellation property that is intermediate between these two properties.

\begin{theorem}
If $A$ is a $C^*$--algebra with stable $IR$ and if $p$ and $q$ are projections in $A$ which generate the same ideal and which have the same image in $K_0(A)$, then $p\sim q$. 
\end{theorem}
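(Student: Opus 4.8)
The plan is to reduce the statement to a matrix amplification and then invoke Theorem~3.1.

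First I would reduce to the unital case. If $A$ is non-unital, then $\tA$ has stable $IR$ (as noted in the paragraph preceding this theorem), $p$ and $q$ generate the same ideal of $\tA$ as of $A$, and since $K_0(A)$ is by definition a subgroup of $K_0(\tA)$ the hypothesis $[p]=[q]$ persists in $K_0(\tA)$; moreover any partial isometry $v\in\tA$ with $v^*v=p$, $vv^*=q$ satisfies $v=qvp\in A$. So assume $A$ unital, and let $I$ denote the common ideal generated by $p$ and $q$.

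Next, since $p$ and $\b1-p$ are orthogonal with sum $\b1$, we have $[p]+[\b1-p]=[\b1]$ in $K_0(A)$, and likewise for $q$; as $[p]=[q]$ this forces $[\b1-p]=[\b1-q]$ in $K_0(A)$. Hence, by the description of $K_0$ recalled above, there is $n$ with $(\b1-p)\oplus\b1_n\sim(\b1-q)\oplus\b1_n$ in $B:=\bM_{n+1}(A)$. Now work in $B$, which has $IR$ because $A$ has stable $IR$. Put $\hat p=p\oplus 0_n$ and $\hat q=q\oplus 0_n$, projections supported in the $(1,1)$-corner of $B$, which is a copy of $A$. Then $\hat p$ and $\hat q$ generate the same ideal $\bM_{n+1}(I)$ of $B$, while $\b1_B-\hat p=(\b1-p)\oplus\b1_n$ and $\b1_B-\hat q=(\b1-q)\oplus\b1_n$; so the relation just found says exactly that $(\b1_B-\hat p)\sim(\b1_B-\hat q)$, which for the unital algebra $B$ is the hypothesis $(\b1-\hat p)\sim(\b1-\hat q)$ in $B^+$ of Theorem~3.1. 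Theorem~3.1 then gives $\hat p\sim\hat q$ in $B$, and any implementing partial isometry $V$ satisfies $V=\hat qV\hat p$, hence has the form $v\oplus 0_n$ with $v\in A$, $v^*v=p$, $vv^*=q$; so $p\sim q$.

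The one non-routine step is the decision to amplify: trying to apply Theorem~3.1 to $p$ and $q$ directly fails because $\b1-p$ and $\b1-q$ need not generate the same ideal, so complementation does not preserve its hypotheses. Passing to $\bM_{n+1}(A)$ repairs this, since $p\oplus 0_n$ and $q\oplus 0_n$ automatically generate the same ideal there, whereas their complements are $(\b1-p)\oplus\b1_n$ and $(\b1-q)\oplus\b1_n$, and the identity $[\b1-p]=[\b1-q]$ in $K_0(A)$ is precisely what makes these Murray--von Neumann equivalent once $n$ is large. Everything else is bookkeeping: that $p\oplus 0_n$ generates $\bM_{n+1}(I)$, that equivalence in $B\oplus\bC$ of $P\oplus 1$ and $Q\oplus 1$ amounts to equivalence of $P$ and $Q$ in $B$, and that a corner-supported equivalence in $\bM_{n+1}(A)$ descends to $A$.
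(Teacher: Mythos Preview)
Your proof is correct and follows essentially the same line as the paper's: pass from $[p]=[q]$ to $[\b1-p]=[\b1-q]$, stabilize to turn the $K_0$-equality into a genuine Murray--von Neumann equivalence $(\b1-p)\oplus\b1_{n-1}\sim(\b1-q)\oplus\b1_{n-1}$, and then apply Theorem~3.1 in the matrix algebra (the paper works in $\tA\otimes\bM_n$ directly rather than first reducing to the unital case, but this is cosmetic). One small quibble with your closing commentary: the reason a direct application of Theorem~3.1 fails is not that $\b1-p$ and $\b1-q$ might generate different ideals---Theorem~3.1 never asks that---but simply that $[\b1-p]=[\b1-q]$ in $K_0$ does not by itself give $(\b1-p)\sim(\b1-q)$; amplification is exactly what converts the $K_0$-equality into an honest equivalence.
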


\begin{proof} Since $p$ and $q$ have the same image in $K_0(A)$, also $\b1-p$ and $\b1-q$ have the same image in $K_0(\tA)$. This implies that for sufficiently large $n$
\[\begin{pmatrix} \b1-p & 0\\
0 & \b1_{n-1}\end{pmatrix} \sim
\begin{pmatrix} \b1-q & 0\\
0 & \b1_{n-1}\end{pmatrix},\]
where  the equivalence takes place in $\tA\otimes\bM_n$. In other words if $p'=\begin{pmatrix} p & 0\\ 0 & 0_{n-1}\end{pmatrix}$ and $q'=\begin{pmatrix} q & 0\\ 0 & 0_{n-1}\end{pmatrix}$, then $(\b1_n-p')\sim (\b1_n-q')$. So Theorem 3.1 implies $p'\sim q'$, whence $p\sim q$. 
\end{proof}

\begin{proposition} If $A$ is a $C^*$--algebra with stable $IR$ and if $I$ is an ideal of $A$, then the natural map from $K_0(I)$ to $K_0(A)$ is injective.
\end{proposition}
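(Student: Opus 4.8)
The plan is to derive the statement from the cyclic six term exact sequence of the ideal $I$, with Theorem 2.5 supplying the only non-trivial ingredient. By exactness at $K_0(I)$, the kernel of the natural map $K_0(I)\to K_0(A)$ equals the image of the boundary map $\partial\colon K_1(A/I)\to K_0(I)$, and by exactness at $K_1(A/I)$, that boundary map vanishes as soon as the natural map $K_1(A)\to K_1(A/I)$ is surjective. So the whole proposition reduces to proving that $K_1(A)\to K_1(A/I)$ is onto.

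I would first reduce to the case that $A$ is unital: replacing $A$ by $\tA$ preserves stable $IR$ (as noted after the definition of stable $IR$, $\tA\otimes\bM_n$ has $IR$ for every $n$), keeps $I$ an ideal, and does not affect the conclusion since $K_0(A)$ is a subgroup of $K_0(\tA)$. With $A$, and hence $A/I$, unital, every class in $K_1(A/I)$ is represented by an invertible $\ou$ in $(A/I)\otimes\bM_m$ for some $m$. Now $A\otimes\bM_m$ is unital, it has $IR$ because $A$ has stable $IR$, and it has $I\otimes\bM_m$ as an ideal with quotient $(A/I)\otimes\bM_m$; hence Theorem 2.5 applies to this ideal and yields in particular that invertibles lift from $(A/I)\otimes\bM_m$ to $A\otimes\bM_m$. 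Lifting $\ou$ to $u\in GL(A\otimes\bM_m)$, the class $[u]\in K_1(A)$ maps to the given one, which proves surjectivity and completes the argument.

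The step to watch is the application of Theorem 2.5 over the matrix amplifications $A\otimes\bM_m$ rather than over $A$ itself: this is exactly where \emph{stable} $IR$ is essential, since $IR$ alone would only let us lift invertibles from $A/I$ to $A$, whereas $K_1(A/I)$ sees invertibles of arbitrary matrix size over $A/I$. I also expect that trying to deduce the proposition directly from the cancellation result Theorem 3.3 is less clean: a class of $K_0(I)$ dying in $K_0(A)$ can be represented by projections in a matrix algebra over the unitization of $I$ which, after absorbing a scalar corner, generate the same ideal over $A$ and share a $K_0(A)$-class, so Theorem 3.3 does produce a Murray--von Neumann equivalence between them; but that equivalence lives over $A$, whereas vanishing in $K_0(I)$ demands that the implementing partial isometry lie over the unitization of $I$, and closing that gap again requires lifting a unitary, i.e. invoking Theorem 2.5. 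Routing through the six term sequence avoids this bookkeeping.
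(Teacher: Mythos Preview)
Your proof is correct and follows essentially the same route as the paper: reduce via the six term exact sequence to surjectivity of $K_1(A)\to K_1(A/I)$, and obtain that surjectivity from Theorem~2.5 applied to the matrix amplifications $\tA\otimes\bM_n$ (this being exactly where stable $IR$ enters). Your write-up is more detailed, but the underlying argument is the same as the paper's two-line proof.
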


\begin{proof} By the $K$--theory exact sequence it is sufficient to show the natural map from $K_1(A)$ to $K_1(A/I)$ is surjective. This follows from the fact that invertibles lift from $(\tA/I)\otimes \bM_n$ to $\tA\otimes\bM_n$, which in turn follows from Theorem 2.5.
\end{proof}

In \cite{BP2} the following terminology was introduced, though the concepts involved were not new. A $C^*$--algebra $A$ has {\it $K_1$--surjectivity} if the natural map from $GL(\tA)$ to $K_1(A)$ is surjective and {\it $K_1$--injectivity} if that map is injective on homotopy classes. Also a non-unital $C^*$--algebra $A$ has {\it good index theory} if the following is true: Whenever $A$ is imbedded as an ideal in a unital $C^*$--algebra $B$, any element of $GL(B/A)$ which has index 0 can be lifted to $GL(B)$. 
Here if $[u]_1$ is the image of $u$ in $K_1(B/A)$, the index of $u$ is the image of $[u]_1$ in $K_0(A)$ under the boundary map of the $K$--theory exact sequence. An equivalent way to state good index theory is that if $[u]_1$ can be lifted to a class $\alpha$ in $K_1(B)$, then $u$ can be lifted to $GL(B)$. When stated in this way, there is a stronger property which is natural to consider: 
Demand that $u$ can be lifted to $v$ in $GL(B)$ such that $[v]_1=\alpha$.

It is not hard to see that this stronger property is equivalent to good index theory plus $K_1$--surjectivity. In fact if $A$ has $K_1$--surjectivity and $v$ in $GL(B)$ lifts $u$, then $\alpha-[v]_1$ is in the image of $K_1(A)$. Therefore we can simply multiply $v$ by an appropriate element of $(\b1+A)  \cap GL(\tA)$ to obtain a lift $w$ with $[w_1]=\alpha$. Conversely if the stronger property is satisfied, we obtain $K_1$--surjectivity from the special case $u=\b1$. In my prior experience whenever good index theory could be proved, $K_1$--surjectivity could also be proved.
 In \cite{BP2}, Pedersen and I proved that any extremally rich $C^*$--algebra with weak cancellation has good index theory, $K_1$--surjectivity, and $K_1$--injectivity. Since stable $IR$ implies a cancellation property stronger than weak cancellation, and since $IR$ is equivalent to extremal richness for simple $C^*$--algebras (see Propostion 4.2 below), it might be hoped that stable $IR$ would at least imply both good index theory and $K_1$--surjectivity. However, I have been successful only in proving good index theory.

\begin{theorem} Stable $IR$ implies good index theory.
\end{theorem}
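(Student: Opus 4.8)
The plan is to run the standard polar‑decomposition and six‑term‑sequence reductions, reduce good index theory to a ``destabilization'' step, and then settle that step by combining Corollary 2.3, R\o rdam's approximation theorem \cite{R1} (the workhorse of the proofs of Theorems 3.1 and 3.2), and the cancellation of Theorem 3.3.

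First the reductions. $B$ is unital by the definition of good index theory. Writing $u=(u|u|^{-1})|u|$ with $|u|=(u^{*}u)^{1/2}$, the positive invertible $|u|$ lifts to a positive invertible of $B$ (lift $\log|u|$ to a self‑adjoint element and exponentiate) and $\operatorname{index}(u)=\operatorname{index}(u|u|^{-1})$, so we may assume $u$ is a unitary of $B/A$. Since $\partial[u]_{1}=0$, exactness of the six term sequence shows $[u]_{1}$ lifts to $K_{1}(B)$, and since a null‑homotopic invertible always lifts, $u\oplus\b1_{n-1}$ lifts to $GL_{n}(B)$ for some $n\ge 1$. It therefore suffices to prove: for $n\ge 2$, if $u\oplus\b1_{n-1}$ lifts to $GL_{n}(B)$ then $u\oplus\b1_{n-2}$ lifts to $GL_{n-1}(B)$; iterating this (and using that $A\otimes\bM_{k}$ again has stable $IR$) produces a lift of $u$ to $GL(B)$. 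So fix $n\ge 2$, and, replacing a given lift by its unitary part via polar decomposition in $\bM_{n}(B)$, fix a unitary $W\in\bM_{n}(B)$ with $\pi_{n}(W)=u\oplus\b1_{n-1}$, where $\pi_{n}\colon\bM_{n}(B)\to\bM_{n}(B/A)$ is the quotient map.

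Put $F=\b1_{B}\otimes e_{nn}$ and $E=WFW^{*}$, projections in $\bM_{n}(B)$. Because $u\oplus\b1_{n-1}$ fixes the last coordinate, $\pi_{n}(E)=\pi_{n}(F)$, so $E-F\in\bM_{n}(A)$, and $E=WFW^{*}$ is similar to $F$, hence $E\sim F$ in $\bM_{n}(B)$. Now suppose we can find a unitary $Z\in\bM_{n}(B)$ with $Z\equiv\b1_{n}\ (\mathrm{mod}\ \bM_{n}(A))$ and $ZFZ^{*}=E$. Then $Z^{*}W$ commutes with $F$, hence is block diagonal; its upper‑left $(n-1)\times(n-1)$ block is an invertible element of $\bM_{n-1}(B)$, and computing modulo $\bM_{n}(A)$ shows this block maps to $u\oplus\b1_{n-2}$ — the required lift. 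Thus the whole theorem comes down to: \emph{the projections $E$ and $F$ of $\bM_{n}(B)$, which differ by an element of $\bM_{n}(A)$ and whose defect carries the vanishing relative class $\operatorname{index}(u)\in K_{0}(A)$, are carried one onto the other by a unitary of $\bM_{n}(B)$ that is trivial modulo $\bM_{n}(A)$.}

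To do this one seeks $Z=z+y$ with $z$ a partial isometry from $E$ onto $F$ and $y$ a partial isometry from $\b1_{n}-E$ onto $\b1_{n}-F$, each lifting the relevant basepoint projection of $\bM_{n}(B/A)$. Here R\o rdam enters: the canonical partial isometry attached to the similarity $E=WFW^{*}$ has $|\cdot|$ and $|(\cdot)^{*}|$ equal to projections, so in any quotient of $\bM_{n}(B)$ its domain projection is the identity iff its range projection is; hence it lies in $R(\bM_{n}(B))$. Since $\bM_{n}(A)$ has $IR$ (stable $IR$ of $A$) and its image in $\bM_{n}(B/A)$ is a limit of images of invertibles of $\bM_{n}(B)$, Corollary 2.3 places it in $\overline{GL(\bM_{n}(B))}$, whence \cite{R1} realizes the equivalences $E\sim F$ and $\b1_{n}-E\sim\b1_{n}-F$ by genuine unitaries of $\bM_{n}(B)$. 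What is \emph{not} automatic is that these unitaries can be chosen over the prescribed basepoints, i.e.\ that the vanishing of the relative $K_{0}$‑class ($\operatorname{index}(u)=0$) suffices to kill the obstruction. This is where Theorem 3.3 is used: the index‑zero hypothesis forces, in every quotient of $A$ (each of which still has stable $IR$, by Theorem 2.5), the complementary defect projections to have equal $K_{0}$‑classes, and — after arranging the bookkeeping so that the projections in question lie genuinely in $A\otimes\bM_{m}$ rather than merely in its unitization, and checking that they generate the same ideal — the cancellation of Theorem 3.3 upgrades this equality of classes to the Murray--von Neumann equivalences that make $z$ and $y$ exist over their basepoints.

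The step I expect to be the main obstacle is exactly this last one: threading $\operatorname{index}(u)=0$ through the ideal structure of $A$ so that Theorem 3.3 applies — in particular verifying its ``same ideal'' hypothesis, and keeping the relevant projections inside $A\otimes\bM_{m}$ — and checking that the cancellation it provides is precisely enough to neutralize the modulo‑$\bM_{n}(A)$ obstruction, without the argument collapsing into the circular assertion ``$u$ lifts to a unitary of a corner isomorphic to $B$.'' Everything else (the reductions, the $\bM_{n}$ bookkeeping, and the verifications that the relevant element lies in $R(\bM_{n}(B))$ and in $\overline{GL(\bM_{n}(B))}$) is routine given Corollary 2.3 and \cite{R1}.
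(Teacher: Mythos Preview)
Your plan diverges from the paper's proof, and the gap you flag is real and not filled by what you wrote. The paper never performs a step-by-step destabilization via projections. Instead, after obtaining (as you do) that $s\oplus\b1_{n-1}$ lifts to $GL_n(B)$, it uses Lemma~2.4 to choose an $\ep$-almost regular lift $t$ of $s$ in $B$, observes that $t'=t\oplus\b1_{n-1}$ is $\ep$-almost regular in $B\otimes\bM_n$ with $\pi_n(t')$ a liftable invertible, and applies Lemma~2.1 directly to get $\mathrm{dist}(t',GL_n(B))\le\ep$. The destabilization is then the elementary Schur-complement trick: for $w=\begin{pmatrix}a&b\\c&d\end{pmatrix}\in GL_n(B)$ with $\|w-t'\|<\ep'<1$, the block $d$ is invertible and $a-bd^{-1}c\in GL(B)$ approximates $t$; hence $s$ is a limit of liftable invertibles and therefore liftable. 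No polar decompositions, no \cite{R1}, no Theorem~3.3, no corner projections.

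In your route the crucial unproven step is the existence of a unitary $Z\equiv\b1_n\pmod{\bM_n(A)}$ with $ZFZ^{*}=E$. Your use of Corollary~2.3 and \cite{R1} does yield \emph{some} unitary $U$ with $UF=WF$ (hence $UFU^{*}=E$), but only with $\pi_n(U)$ fixing $\pi_n(F)$; you have no control over $\pi_n(U)$ on $\b1_n-\pi_n(F)$. Correcting this amounts to lifting a prescribed unitary of the corner $(\b1_n-\pi_n(F))\bM_n(B/A)(\b1_n-\pi_n(F))\cong\bM_{n-1}(B/A)$ to $\bM_{n-1}(B)$---which is essentially the problem you started with, so the circularity you feared is genuine. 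Theorem~3.3 does not rescue this: it produces Murray--von Neumann equivalences of projections \emph{inside} $A\otimes\bM_m$ from equal $K_0$-classes and equal ideals, but here the projections $E,F,\b1_n-E,\b1_n-F$ are full in $\bM_n(B)$, not in $\bM_n(A)$, and what you need is not an equivalence in $A$ but a partial isometry in $B$ lying over a specified one in $B/A$. You have not explained how cancellation in $A$ manufactures that relative lift, and I do not see that it does without further input. The paper's Schur-complement argument simply bypasses this obstruction.
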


\begin{proof} Assume $A$ has stable $IR$, $A$ is an ideal in a unital $C^*$--algebra $B$, and $s$ is an element of $GL(B/A)$ with index 0. Let $\pi:B\to B/A$ and $\pi_n:B\otimes\bM_n\to (B/A)\otimes \bM_n$ be the quotient maps. Since $[s]_1$ lifts to $K_1(B)$, for sufficiently large $n$ there is $v$ in $GL_n(B)$ such that $\pi_n(v)$ is homotopic to $\begin{pmatrix} s & 0\\ 0 & \b1_{n-1}\end{pmatrix}$ in $GL_n(B/A)$. By the homotopy lifting property for $\pi_n: GL_n(B)\to GL_n(B/A)$, $\begin{pmatrix} s & 0\\ 0 & \b1_{n-1}\end{pmatrix}$ lifts to $GL_n(B)$. Now choose $0<\ep<1$ and use Lemma 2.4 to find an $\ep$--almost regular lift $t$ of $s$. Then clearly $t'=\begin{pmatrix} t & 0\\ 0 & \b1_{n-1}\end{pmatrix}$ is $\ep$--almost regular in $B\otimes \bM_n$. Since the image of $t'$ is a liftable invertible, Lemma 2.1 applies and dist$(t', GL_n(B))\le\ep$. Choose $\ep'$ with $\ep<\ep'<1$ and $w=\begin{pmatrix} a & b\\ c & d\end{pmatrix}$ in $GL_n(B)$ with $\|w-t'\|<\ep'$. Since $\|d-\b1_{n-1}\|<1$, $d$ is invertible and we may perform elementary row and column operations by multiplying $w$ on the left by 
$\begin{pmatrix} \b1 & -bd^{-1}\\ 0 & \b1_{n-1}\end{pmatrix}$ and on the right by $\begin{pmatrix} \b1 & 0\\ -d^{-1}c & \b1_{n-1}\end{pmatrix}$. In the upper left hand corner we obtain $a-bd^{-1}c\in GL(B)$, and $\|a-bd^{-1}c-t\|< \ep'+(\ep')^2 / (1-\ep')$. 
Since $\ep'$ can be arbitrarily small, we see that $s$ can be approximated arbitrarily well by a liftable invertible $u$. If $\|us^{-1}-1\|<1$, we deduce that $s$ is liftable.
\end{proof}

\begin{theorem} If $I$ is a closed two--sided ideal of a $C^*$--algebra $A$, and if both $I$ and $A/I$ have stable $IR$, then $A$ has stable $IR$ if and only if the natural map from $K_1(A)$ to $K_1(A/I)$ is surjective.
\end{theorem}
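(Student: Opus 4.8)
\textit{Proof proposal.} The plan is to translate the statement, by means of Theorem 2.5, into an assertion about lifting invertibles along the quotient maps $A\otimes\bM_n\to (A/I)\otimes\bM_n$, and then to obtain that lifting from the good index theory established in Theorem 3.5.

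First I would reduce to the case that $A$ is unital by replacing $A$ with $\tA$. This is harmless: $\tA$ has stable $IR$ precisely when $A$ does (if $A$ has stable $IR$ then $\tA\otimes\bM_n$ has $IR$ for every $n$, as noted right after the definition of stable $IR$; conversely $A\otimes\bM_n$ is an ideal, hence a hereditary $C^*$--subalgebra, of $\tA\otimes\bM_n$, so Proposition 1.4 applies), and the map $K_1(\tA)\to K_1(\tA/I)$ is canonically identified with $K_1(A)\to K_1(A/I)$. Assuming now $A$ unital, fix $n$: then $A\otimes\bM_n$ is unital, $I\otimes\bM_n$ is an ideal of it with quotient $(A/I)\otimes\bM_n$, and by hypothesis both $I\otimes\bM_n$ and $(A/I)\otimes\bM_n$ have $IR$. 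Hence Theorem 2.5 says that $A\otimes\bM_n$ has $IR$ if and only if every invertible in $(A/I)\otimes\bM_n$ lifts to an invertible in $A\otimes\bM_n$. So $A$ has stable $IR$ if and only if, for every $n$, invertibles lift from $(A/I)\otimes\bM_n$ to $A\otimes\bM_n$.

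Granting this reduction, the forward implication is immediate: if all such invertibles lift, then every homotopy class in $GL_n(A/I)$, and hence every element of $K_1(A/I)$, comes from a class in $K_1(A)$ --- this is exactly the step used in the proof of Proposition 3.4. For the converse, assume $K_1(A)\to K_1(A/I)$ is surjective. (We may also assume $I$ is non-unital, since otherwise $I\otimes\bM_n$ is a direct summand of $A\otimes\bM_n$ and the conclusion is trivial.) Fix $n$ and $s\in GL((A/I)\otimes\bM_n)$. In the six term exact sequence of the extension $I\otimes\bM_n\triangleleft A\otimes\bM_n$, the boundary map $\partial\colon K_1((A/I)\otimes\bM_n)\to K_0(I\otimes\bM_n)$ has kernel equal to the image of $K_1(A\otimes\bM_n)\to K_1((A/I)\otimes\bM_n)$, and under the $K_1$--stability isomorphisms this latter map is $K_1(A)\to K_1(A/I)$, which is onto. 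Thus $\partial$ is the zero map and $s$ has index $0$. Since $I$ has stable $IR$, so does $I\otimes\bM_n$, and therefore $I\otimes\bM_n$ has good index theory by Theorem 3.5; applying this with $I\otimes\bM_n$ embedded as an ideal in the unital algebra $A\otimes\bM_n$, the index-zero element $s$ lifts to $GL(A\otimes\bM_n)$. So invertibles lift from $(A/I)\otimes\bM_n$ to $A\otimes\bM_n$ for every $n$, and by the reduction $A$ has stable $IR$.

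The only substantive input is Theorem 3.5, whose proof already performs the delicate destabilization --- converting a lift of $\mathrm{diag}(s,\b1_{m-1})$ in $GL_m$ into a lift of $s$ itself using an $\ep$--almost regular lift, Lemma 2.1, and elementary row and column operations. Since that machinery is reused wholesale, I expect no new obstacle beyond careful bookkeeping with unitizations (so that ``invertibles lift'' refers to the correct unital algebras when $A$ or $A/I$ happens already to be unital) and the routine verification that $K_1$--surjectivity is precisely the condition matching the vanishing of the boundary map after tensoring with $\bM_n$.
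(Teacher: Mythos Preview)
Your proposal is correct and follows the same approach as the paper: reduce to the unital case, use Theorem~2.5 to rephrase stable $IR$ as lifting of invertibles at each matrix level, deduce $K_1$--surjectivity from lifting in the forward direction, and in the converse direction use $K_1$--surjectivity to force index zero and then invoke good index theory (Theorem~3.5) for $I\otimes\bM_n$ to obtain the required lifts. Your write-up is simply more explicit about the bookkeeping (unitizations, the edge case where $I$ is unital, and the identification of the boundary map) than the paper's terse proof.
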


\begin{proof}  If $A$ has stable $IR$, then by Theorem 2.5 invertibles lift from $(\tA/I)\otimes\bM_n$ to $\tA\otimes\bM_n$, and hence the $K_1$--map is surjective. If the $K_1$--map is surjective, then since $I\otimes\bM_n$ has good index theory, we see that invertibles lift from $(\tA/I)\otimes\bM_n$. Then Theorem 2.5 implies that $A$ has stable $IR$.
\end{proof}

\section{\textbf{Additional results and questions}}

\begin{proposition} If $A$ is a simple $C^*$--algebra, then $A$ has $IR$ if and only if either $A$ has stable rank one or $A$ is purely infinite.
\end{proposition}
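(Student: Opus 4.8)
The plan is to prove the two directions separately, the easy one first. If $A$ has stable rank one, it was noted in the introduction (following \cite{FR}) that stable rank one implies $IR$, so that direction is immediate. If $A$ is purely infinite simple, then Corollary 1.11 already gives that $A$ has $IR$. So the content of the proposition is the forward direction: a simple $C^*$--algebra with $IR$ that does not have stable rank one must be purely infinite.

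For the forward direction, assume $A$ is simple, has $IR$, and does not have stable rank one; I must show $A$ is purely infinite, i.e. every nonzero hereditary $C^*$--subalgebra contains an infinite projection (equivalently, for simple $A$, that $A$ is not stably finite, or that some hereditary subalgebra contains a properly infinite projection). First I would extract, from the failure of stable rank one, an element $t$ in (a matrix amplification of) $A^+$ that lies in $R(A^+)$ — using the $IR$ hypothesis and Proposition 1.1 to replace an almost-regular witness by a genuine element of $R$ — but whose polar decomposition produces, via R\o rdam's result \cite{R1} (used exactly as in the proof of Theorem 3.1), a nonunitary partial isometry. Concretely, $t\notin\overline{GL}$ would contradict $IR$, so one instead finds that $A$ contains a nonunitary isometry or coisometry in some corner: an element $u$ with $u^*u=\b1$ but $uu^*\neq\b1$ inside a unital hereditary subalgebra or matrix amplification, which is precisely an infinite projection. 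The point where $IR$ enters is that it forces the relevant $R$-element to be approximable by invertibles, and then R\o rdam's theorem converts the approximation into a unitary $w$ with $w(\b1-p)=u$; if $p\neq 0$ this gives the Murray--von Neumann equivalence $p\sim q$ as in Theorem 3.1, but the failure of stable rank one is exactly what allows $p$ (or rather the defect projections $\b1-p$, $\b1-q$) to be nonzero with $\b1-p$ equivalent to a proper subprojection of itself.

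Here I expect the main obstacle to be the careful bookkeeping needed to locate an \emph{infinite} projection rather than merely a non-invertible one-sided invertible. The natural route is: stable rank one fails means there is $n$ and $a\in A^+\otimes\bM_n$ with $a$ not approximable by invertibles in that matrix algebra, hence — since $A^+\otimes\bM_n=(A\otimes\bM_n)^+$ and $A\otimes\bM_n$ is simple — $A\otimes\bM_n$ does not have stable rank one either; but $A$ having $IR$ does not a priori give stable $IR$, so I would work directly in $A^+$ and pass to hereditary subalgebras. Using Proposition 1.1, non-density of $GL(A^+)$ together with $IR$ means there is $t\in A^+\setminus\overline{GL(A^+)}$ that nevertheless is, say, a limit of $\ep$-almost regular elements — more usefully, since $\overline{GL}=R$ would give density of $GL$ in any quotient that has no one-sided-invertible obstruction, and $A^+$ has only the two ideals $0$ and $A$, the obstruction to $t\in\overline{GL(A^+)}$ must come from the quotient $A^+/A=\bC$ (impossible) or from $t$ being one-sided invertible but not invertible in $A^+$ itself. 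Thus $A^+$ itself contains a nonunitary isometry or coisometry, which forces $\b1$ to be an infinite projection in $A^+$, hence (by simplicity of $A$, taking a suitable corner) $A$ contains an infinite projection; combined with simplicity and real-rank considerations one upgrades "contains an infinite projection" to "purely infinite" via the standard fact (Cuntz) that a simple $C^*$--algebra containing an infinite projection such that every hereditary subalgebra does so is purely infinite — and here hereditary subalgebras of $A$ again have $IR$ by Proposition 1.4, so the same argument applies to each of them, giving an infinite projection in every nonzero hereditary subalgebra.

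To make the last step clean I would isolate it as follows: it suffices to show every nonzero hereditary $C^*$--subalgebra $B$ of $A$ contains an infinite projection. Fix such $B$; by Proposition 1.4, $B$ has $IR$, and $B$ is simple and (as a hereditary subalgebra of a non--stable-rank-one algebra, using that stable rank one passes to hereditary subalgebras in the simple case) $B$ does not have stable rank one; so by the argument of the previous paragraph applied to $B$ in place of $A$, $B^+$ contains a nonunitary one-sided invertible, i.e. $B$ contains an infinite projection. Then $A$ is purely infinite by definition. The delicate points I anticipate are (a) justifying that the obstruction in $\overline{GL(B^+)}\subsetneq R(B^+)$ being forced to vanish by $IR$ really does yield a \emph{non}unitary isometry rather than being vacuous — this needs the characterization, recalled before Proposition 1.1, that $t\in R(A)$ iff $I_+=I_-$ for all $\ep$, so that if $GL(B^+)$ is not dense then some $t$ with $I_+\neq I_-$ exists, and $I_+,I_-$ can only be $0$ or $B$, giving a genuine one-sided invertible noninvertible element; and (b) checking that "not stable rank one" is inherited by hereditary subalgebras of a simple $C^*$--algebra, which follows because stable rank one of a hereditary subalgebra $B$ would, for $B$ full (automatic by simplicity), imply stable rank one of $A$ by Rieffel's results on stable rank and strong Morita equivalence.
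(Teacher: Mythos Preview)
Your final paragraph is essentially the paper's argument, and the two ``delicate points'' (a) and (b) you flag are exactly the ingredients the paper uses: $IR$ for $B$ (Proposition 1.4), failure of stable rank one for $B$ via strong Morita equivalence, and the observation that the only relevant quotients of $\tilde B$ are $\tilde B$ and $\bC$, so a witness $t\notin R(\tilde B)$ must be one--sided invertible but not invertible in $\tilde B$ itself, yielding a proper isometry. The middle paragraph (matrix amplifications, R\o rdam's theorem \cite{R1}, the Theorem 3.1 machinery) is a detour; none of that is needed, and you correctly abandon it by the end.

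There is one genuine gap in your final formulation. You write ``$B^+$ contains a nonunitary one--sided invertible, i.e.\ $B$ contains an infinite projection.'' A proper isometry $u$ in $\tilde B$ shows $\b1_{\tilde B}$ is infinite and produces a nonzero projection $\b1-uu^*\in B$, but it does not directly show that \emph{this} projection (or any projection in $B$) is infinite; when $B$ is non--unital, $\b1_{\tilde B}\notin B$. The paper closes this gap by splitting the conclusion in two: first, every nonzero hereditary $B$ contains a nonzero projection (via the proper isometry in $\tilde B$ as above); second, every nonzero projection $p\in A$ is infinite, by running the \emph{same} argument with $B=pAp$, which is now unital with unit $p$, so a proper isometry in $B$ exhibits $p$ as infinite. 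Your argument becomes correct once you insert this second pass, and then it coincides with the paper's proof.
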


\begin{proof} For the direction not already proved, assume that $A$ has $IR$ and is not of stable rank one. We need to show that every non-zero hereditary $C^*$--subalgebra $B$ of $A$ contains a non-zero projection and that every non-zero projection is infinite. For the first, note that $B$ cannot have stable rank one, since $B$ is strongly Morita equivalent to $A$ and the stable rank one property is preserved by strong Morita equivalence. We may assume $B$ is not unital, and thus $\tB$ can be identified with $B+\bC\b1\subset \tA$. Then there must be $t\in\b1+B$ such that $t\not\in R(\tB)$ (since $B$ has $IR$ by Proposition 1.4). Since the only relevant quotient of $\tB$ is $\tB$ itself, $t$ is one-sided invertible but not invertible, and $\tB$ contains a proper isometry $u$. Then $\b1-uu^*$ is the desired non-zero projection in $B$. For the second, let $p$ be a non-zero projection in $A$, and let $B=pAp$. The same sort of reasoning as above shows that there is a proper isometry in $B$, whence $p$ is infinite.
\end{proof}

\begin{theorem} If $A$ is a $C^*$--algebra with $IR$ and if $A$ is not of stable rank one, then $A$ has two closed two--sided ideals $I$ and $J$ such that $I\subset J$ and $J/I$ is purely infinite simple.
\end{theorem}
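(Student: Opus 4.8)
The plan is to localise the failure of stable rank one to a simple subquotient and then invoke the preceding proposition. Since $A$ has $IR$ exactly when $\tA$ does and fails stable rank one exactly when $\tA$ does, and since a pair of closed ideals $I_0\subseteq J_0$ of $\tA$ with $J_0/I_0$ simple and infinite--dimensional restricts, on intersection with $A$, to a pair of closed ideals of $A$ with an isomorphic quotient (simplicity and infinite--dimensionality of $J_0/I_0$ prevent $(J_0\cap A)/(I_0\cap A)$ from being a proper ideal of it or zero), I may assume $A$ is unital. Stable rank one passes to quotients \cite{Ri}, so the family of closed ideals $\mathcal I$ with $A/\mathcal I$ not of stable rank one is nonempty (it contains $0$) and closed under intersections of chains; by Zorn's lemma choose a minimal such ideal $\mathcal I_0$ and put $C=A/\mathcal I_0$. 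Then $C$ is unital, has $IR$ (Theorem 2.5), is not of stable rank one, and every proper quotient of $C$ is of stable rank one.

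First I would produce a proper isometry in $C$. For unital $C$ with $IR$ we have $R(C)=\overline{GL(C)}$, so $\overline{GL(C)}\ne C$ yields some $s\in C\setminus R(C)$; hence $\pi_{\mathcal J}(s)$ is one--sided invertible but not invertible for some closed ideal $\mathcal J$ of $C$. If $\mathcal J$ were a nonzero proper ideal, then $C/\mathcal J$ would be of stable rank one, so $R(C/\mathcal J)=C/\mathcal J$, which is impossible; and $\mathcal J=C$ is excluded. So $s$ itself is one--sided invertible but not invertible in $C$; replacing $s$ by $s^*$ if necessary, we may take $s$ left invertible, and then $u:=s(s^*s)^{-1/2}$ satisfies $u^*u=\b1\ne uu^*$. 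Put $p=\b1-uu^*\ne 0$. Now $p$ lies in \emph{every} nonzero proper ideal $\mathcal J$ of $C$: there $C/\mathcal J$ has stable rank one, so $\pi_{\mathcal J}(u)$ is a left invertible element of $R(C/\mathcal J)=C/\mathcal J$, hence invertible, hence unitary, and thus $\pi_{\mathcal J}(p)=\b1-\pi_{\mathcal J}(u)\pi_{\mathcal J}(u)^*=0$. Consequently the ideal $\mathcal N$ of $C$ generated by $p$ is contained in every nonzero ideal of $C$, so $\mathcal N$ is the smallest nonzero ideal. Such an ideal is automatically simple: a nonzero ideal $\mathcal N'$ of $\mathcal N$ generates, as an ideal of $C$, an ideal lying between $0$ and $\mathcal N$, hence equal to $\mathcal N$; but absorbing the outer factors into $\mathcal N$ by an approximate unit of $\mathcal N'$ shows this ideal is also contained in $\overline{\mathcal N\mathcal N'\mathcal N}\subseteq\mathcal N'$, so $\mathcal N'=\mathcal N$.

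It remains to show that $\mathcal N$ is purely infinite simple. As an ideal, hence a hereditary $C^*$--subalgebra, of $C$ it has $IR$ (Proposition 1.4). If $\mathcal N=C$, then $C$ is simple, has $IR$, and is not of stable rank one, so by the preceding proposition it is purely infinite, and $I=0$, $J=C$ work. If $\mathcal N\subsetneq C$, I claim $\mathcal N$ is not of stable rank one: otherwise $\mathcal N$ and the proper quotient $C/\mathcal N$ would both have stable rank one, while, $C$ having $IR$, invertibles lift from $C/\mathcal N$ to $C$ (Theorem 2.5); then a routine approximation argument — lift an invertible close to the image in $C/\mathcal N$ of a given $c\in C$, and correct it by an invertible $\b1+k$ with $k\in\mathcal N$, obtained from the density of $GL(\mathcal N+\bC\b1)$ in $\mathcal N+\bC\b1$ — would force $GL(C)$ to be dense in $C$, contradicting the choice of $C$. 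So $\mathcal N$ is simple, has $IR$, and is not of stable rank one, whence it is purely infinite simple by the preceding proposition. Writing $\mathcal N=J/\mathcal I_0$ with $J$ a closed ideal of $A$ containing $\mathcal I_0$, the pair $I=\mathcal I_0\subseteq J$ is as required.

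The step I expect to be the main obstacle is the last one: that under the standing $IR$ hypothesis an extension of a $C^*$--algebra of stable rank one by a $C^*$--algebra of stable rank one again has stable rank one. This is false without the $IR$ hypothesis — the Toeplitz algebra is the standard counterexample — so the lifting of invertibles supplied by Theorem 2.5 is genuinely needed. A second point requiring care is the observation that the defect projection $p=\b1-uu^*$ must land in every nonzero proper ideal of $C$; this is what identifies $\mathcal N$ as the smallest nonzero ideal and thereby forces it to be simple.
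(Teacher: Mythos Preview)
Your Zorn step does not do what you need. You take $\mathcal I_0$ \emph{minimal} in the family $\{\mathcal I:\ A/\mathcal I\text{ is not of stable rank one}\}$, justified by closure under intersections, and then claim that every proper quotient of $C=A/\mathcal I_0$ has stable rank one. But minimality constrains ideals \emph{below} $\mathcal I_0$, not above it; and since $0$ lies in the family and is contained in every ideal, the unique minimal element is $\mathcal I_0=0$, so $C=A$ and nothing has been gained. What your subsequent argument actually requires is a \emph{maximal} $\mathcal I_0$, and for that Zorn needs closure under increasing unions: if each $A/\mathcal I_\alpha$ fails stable rank one along an increasing chain, so must $A/\overline{\bigcup_\alpha\mathcal I_\alpha}$. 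That is not a consequence of ``stable rank one passes to quotients'' (which goes the wrong way), and it is not obvious even under the $IR$ hypothesis.

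The paper avoids this by running Zorn on a different family. It first passes to a quotient $\tA/I_0$ containing a proper isometry $u$ with defect $p=\b1-uu^*$, and then takes $I_1\supseteq I_0$ maximal among ideals not containing the image of $p$. This family is trivially closed under increasing unions, since a projection absent from every member of a chain has norm $1$ in each quotient and hence in the limit. Maximality makes the ideal generated by $\overline p$ in $\tA/I_1$ simple. Then, instead of invoking Proposition~4.1 via an extension argument, the paper applies Theorem~3.1 directly: with $\overline q=\b1-\overline u^{\,2}(\overline u^{\,2})^*$ one has $\b1-\overline p\sim\b1\sim\b1-\overline q$ and $\overline p,\overline q$ generate the same ideal, so $\overline p\sim\overline q$, which exhibits $\overline p$ as properly infinite and forces the simple subquotient to be purely infinite.

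Everything you do \emph{after} obtaining a $C$ with $IR$, not of stable rank one, and all proper quotients of stable rank one is correct and is a pleasant alternative to the paper's endgame: the observation that the defect projection must lie in every nonzero ideal, the simplicity of the ideal $\mathcal N$ it generates, and the use of Theorem~2.5 (lifting of invertibles) to rule out stable rank one for $\mathcal N$ all go through. The genuine obstacle is not the extension step you flagged, but producing such a $C$ in the first place; for that you need a Zorn argument of the paper's type, anchored to a fixed projection rather than to the failure of stable rank one.
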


\begin{proof} Since $A$ does not have stable rank one, $R(\tA)\ne\tA$, and hence some quotient $\tA/I_0$  has a proper isometry $u$. Let $p=\b1-uu^*$ and let $I_1\supset I_0$ be an ideal maximal with respect to the property that $p\not\in I_1/I_0$. (Since the image of $p$ in any quotient is either 0 or of norm 1, if $\{L_\alpha\}$ is an upward directed family of ideals not containing $p$, then $(\cup L_\alpha)^-$ does not contain $p$. So Zorn's Lemma can be applied.) Let $\op$ be the image of $p$ in $\tA/I_1$. Let $J_1$ be the ideal such that $J_1/I_1$ is the ideal generated by $\op$. Then $J_1/I_1$ is simple. There is an isometry $\ou$ in $\tA/I_1$ such that $\b1-\ou\ou^*=\op$. Let $\oq=\b1-(\ou^2)(\ou^2)^*$. Then 
$\begin{pmatrix} \oq & 0\\
0 & 0\end{pmatrix} \sim
\begin{pmatrix} \op & 0\\
0 & \op\end{pmatrix}$
in $(\tA/I_1)\otimes \bM_2$. It follows that $\op$ and $\oq$ generate the same ideal (namely $J_1$) and of course $\b1-\oq\sim \b1\sim \b1-\op$. So Theorem  3.1 implies $\op\sim \oq$, whence $\op$ and $\oq$ are infinite. Therefore $J_1/I_1$ is not of stable rank one, and hence it is purely infinite. Now if $I=I_1\cap A$ and $J=J_1\cap A$, then $J/I\cong J_1/I_1$, since $\tA/A$ is at most one--dimensional and $J_1/I_1$ is simple and infinite--dimensional.
\end{proof}

\begin{corollary} Any $GCR$ $C^*$--algebra which has $IR$ is of stable rank one.
\end{corollary}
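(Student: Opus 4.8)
The plan is to obtain this corollary directly from Theorem 4.3 by a contradiction argument. Suppose, for contradiction, that $A$ is a $GCR$ $C^*$--algebra which has $IR$ but is \emph{not} of stable rank one. Then Theorem 4.3 applies and furnishes closed two--sided ideals $I\subset J$ of $A$ such that $J/I$ is purely infinite simple.

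Next I would invoke two classical facts about $GCR$ (i.e.\ type $\mathrm{I}$, postliminal) $C^*$--algebras. First, the class of $GCR$ $C^*$--algebras is closed under the formation of closed ideals and of quotients, so the subquotient $J/I$ of $A$ is again $GCR$. Second, a simple $GCR$ $C^*$--algebra is elementary: if $\pi$ is an irreducible representation of such an algebra $D$, then $\pi$ is faithful by simplicity and $\pi(D)\supseteq\bK(\cH_\pi)$ by the defining property of $GCR$, so $\pi^{-1}(\bK(\cH_\pi))$ is a nonzero ideal and hence $D\cong\bK(\cH_\pi)$. Applying this with $D=J/I$ gives $J/I\cong\bK(\cH)$ for some Hilbert space $\cH$.

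Finally I would note that $\bK(\cH)$ is never purely infinite, for any $\cH$: it has nonzero rank--one projections, which are finite, whereas in a purely infinite simple $C^*$--algebra every nonzero projection is infinite (equivalently, $\bC$ and the matrix algebras $\bM_n$ are excluded by definition, and $\bK$ on an infinite--dimensional space still has finite nonzero projections). This contradicts the previous paragraph, so no such $A$ can exist; that is, every $GCR$ $C^*$--algebra with $IR$ is of stable rank one.

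There is essentially no technical obstacle here, since the substance is entirely carried by Theorem 4.3. The only points that call for a citation rather than an argument are the permanence of the $GCR$ property under subquotients and the structure theorem identifying simple $GCR$ algebras with algebras of compact operators, both of which are standard (Dixmier, or Pedersen's book on $C^*$--algebras and their automorphism groups).
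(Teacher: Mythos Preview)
Your proof is correct and follows essentially the same approach as the paper, which simply notes that no purely infinite simple $C^*$--algebra is $GCR$; you have merely unpacked this one line by recalling that $GCR$ passes to subquotients and that simple $GCR$ algebras are elementary, hence not purely infinite. (Minor point: the theorem you invoke is Theorem~4.2, not~4.3.)
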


\begin{proof} No purely infinite simple $C^*$--algebra is $GCR$.
\end{proof}

 Since $IR$ is equivalent to extremally rich for simple $C^*$--algebras, we give a couple of simple examples to show there is no implication in general. The Toeplitz algebra is extremally rich but does not have $IR$. The multiplier algebra of a non-unital finite matroid $C^*$--algebra has $IR$ but is not extremally rich.

Here are some questions that I would very much like to know the answers to:
\begin{itemize}
\item[1] Is $IR$ preserved by arbitrary direct limits?\\
\item[2] Does $IR$ imply stable $IR$? What if we also assume real rank zero?\\
\item[3] Does stable $IR$ imply $K_1$--surjectivity? What if we also assume real rank zero?
\end{itemize}

Question 1 has an affirmative answer if all the $C^*-$algebras in the direct limit system have real rank zero.  This can be shown with the help of Theorem 3.2.  In Question 3 I could also ask about $K_1$--injectivity, but in view of the discussion preceeding Theorem 3.5, the lack of an answer to the $K_1$--surjectivity question is more frustrating.  With regard to the second part of the question, note that it is unknown whether real rank zero, all by itself, implies $K_1-$surjectivity.

A less important question is whether stable $IR$ is preserved by strong Morita equivalence and not just by stable isomorphism. Of course strong Morita equivalence is equivalent to stable isomorphism for $\sigma$--unital $C^*$--algebras, in particular for separable $C^*$--algebras. A positive answer to question 1 would probably yield a positive answer to this question, via standard techniques.  In particular, this is so in the real rank zero case.

My association with Ron Douglas was very beneficial to and influential in my career.  In particular my interests in extensions of $C^*-$algebras, $K-$theory of $C^*-$algebras, and stable isomorphism arose out of this association.

\bibliographystyle{amsplain}

\end{document}